\theoremstyle{definition}
\newtheorem{defn}{Definition}[section]
\theoremstyle{plain}
\newtheorem{prop}[defn]{Proposition}
\newtheorem{lem}[defn]{Lemma}
\newtheorem{cor}[defn]{Corollary}
\newtheorem{thm}[defn]{Theorem}
\theoremstyle{remark}
\newtheorem{rmk}[defn]{Remark}
\newtheorem{convention}[defn]{Convention}
\newtheorem{no}[defn]{}
\numberwithin{equation}{defn}
\newcommand{\eq}[2]{\begin{equation}\label{#1}#2 \end{equation}}
\newcommand{\mlnl}[1]{\begin{multline*}#1 \end{multline*}}
\newcommand{\arir}{\ar@{^{(}->}}
\newcommand{\aril}{\ar@{_{(}->}}
\newcommand{\are}{\ar@{>>}}
\newcommand{\xr}[1] {\xrightarrow{#1}}
\newcommand{\xl}[1] {\xleftarrow{#1}}
\newcommand{\inj}{\hookrightarrow}
\newcommand{\ul}{\underline}
\newcommand{\ol}{\overline}
\newcommand{\codim}{{\rm codim}}
\newcommand{\Div}{{\rm div}}
\newcommand{\Hom}{{\rm Hom}}
\newcommand{\Spec}{{\rm Spec \,}}
\newcommand{\Nm}{{\rm Nm }}
\newcommand{\id}{{\rm id}}
\newcommand{\Sm}{\mathrm{Sm}}
\newcommand{\Cor}{\mathrm{Cor}}
\newcommand{\Ker}{{\rm Ker}}
\newcommand{\im}{{\rm Im}}
\newcommand{\sC}{{\mathcal C}}
\newcommand{\sL}{{\mathcal L}}
\newcommand{\sO}{{\mathcal O}}
\newcommand{\sV}{{\mathcal V}}
\newcommand{\A}{{\mathbb A}}
\newcommand{\N}{{\mathbb N}}
\renewcommand{\P}{{\mathbb P}}
\newcommand{\Z}{\mathbb{Z}}
\newcommand{\fc}{\mathfrak{c}}
\newcommand{\fm}{\mathfrak{m}}
\newcommand{\fp}{\mathfrak{p}}
\newcommand{\e}{\epsilon}
\begin{document}

\title{Suslin homology of relative curves with modulus}


\author{Kay R\"ulling \and Takao Yamazaki}
\date{\today}
\address{Freie Universit\"at Berlin, Arnimallee 7, 14195 Berlin, Germany}
\email{kay.ruelling@fu-berlin.de}
\address{Mathematical Institute, Tohoku University,
  Aoba, Sendai 980-8578, Japan}
\email{ytakao@math.tohoku.ac.jp}

\begin{abstract}
We compute the Suslin homology of relative curves with modulus.
This result may be regarded as a modulus version of
the computation of motives for curves,
due to Suslin and Voevodsky.
\end{abstract}

\keywords{Suslin homology, non-homotopy invariant motive}
\subjclass[2010]{14F42 (19E15)}
\thanks{The first author is supported by the ERC Advanced Grant 226257 and the DFG Heisenberg Grant RU 1412/2-1,
the second author is supported by JSPS KAKENHI Grant (15K04773).}
\maketitle

\tableofcontents

\section{Introduction}

Let $B$ be a variety over a field $k$.
The relative Suslin homology 
$H_i^S(X/B)$ of a $B$-scheme $X$ 
is introduced in \cite[{\S 3}]{SuVo96}
and is computed when $X \to B$ is a relative curve
that admits a good compactification
(\cite[Thm. 3.1]{SuVo96}; see also \cite{Li}).
This result is later interpreted as a computation
of motives of curves in Voevodsky's triangulated category of motives
\cite[Thm. 3.4.2]{VoTmot}.

In the present note we introduce 
Suslin homology $H_i^S(X/B, D)$ with modulus $D$
(see Def. \ref{defn:rel-Suslin-homology}).
Here $B$ is a smooth $k$-scheme, $X \to B$ is a proper $k$-morphism,
and $D$ is an effective Cartier divisor on $X$ 
such that $U:=X \setminus |D|$ is smooth
and equidimensional over $k$.
When $B=\Spec k$, the definition of $H_i^S(X/B, D)$
is due to S. Saito and is studied in \cite{KSY2}.
There is a canonical homomorphism
$H_i^S(X/B, D) \to H_i^S(U/B)$
(see \ref{propertiesRSH} \eqref{propertiesRSH6}),
but it is usually far from being bijective.
Our main result describes $H_i^S(X/B, D)$ when
$X \to B$ is a relative curve
(see Remark \ref{rmk:simplecase}):

\begin{thm}\label{thm-intro}
Let $S$ be a smooth connected $k$-scheme,
$\sC$ an integral normal $k$-scheme,
and $p : \sC \to S$ a proper $k$-morphism.
Suppose that the generic fiber of $p$ is $1$-dimensional,
that $U := \sC \setminus |D|$ is smooth over $k$,
that $p|_U : U \to S$ is affine,
and that $D$ is contained in an affine open neighborhood in $X$.
Then we have
\[H_i^S(\sC/S, D)=
\begin{cases} 
{\rm Pic}(\sC, D), &\text{if } i=0,\\
0, &\text{if } i\ge 1.
\end{cases}
\]
Here ${\rm Pic}(\sC, D)$ denotes the relative Picard group
(see \ref{not:OXD}).
\end{thm}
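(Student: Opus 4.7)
The strategy is to adapt the Suslin--Voevodsky computation of the Suslin homology of a smooth relative curve, in which the target is the relative Picard group of a good compactification, to the modulus setting, where the pair ``compactification and its boundary'' is replaced by $(\sC, D)$ itself. I would construct a homomorphism
\[
\phi : H_0^S(\sC/S, D) \longrightarrow \Pic(\sC, D),
\]
show it is an isomorphism, and then establish the vanishing $H_i^S(\sC/S, D) = 0$ for $i \geq 1$ by a simplicial acyclicity argument.

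The map $\phi$ is defined on a generator, an integral admissible modulus cycle $Z \subset S \times_k \sC$ finite surjective over a component of $S$: such a $Z$ is a relative effective Cartier divisor on $\sC \times_k S$ over $S$ which, by the modulus condition, is disjoint from $D \times_k S$. The associated line bundle thus carries a canonical trivialization along $D \times_k S$, producing, via the relative Picard functor, an element of $\Pic(\sC, D)$. Descent of $\phi$ to $H_0^S(\sC/S, D)$ follows because a $1$-chain $W \in C_1(\sC/S, D)$ yields a rational function on $\sC \times_k S$ whose divisor is $\partial_0 W - \partial_1 W$, and the modulus condition forces this function to restrict to $1$ on $D \times_k S$.

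For surjectivity of $\phi$, given $(L, \alpha)$ with $\alpha : \sO_D \xr{\sim} L|_D$, I would use the hypothesis that $|D|$ lies in an affine open $V \subset \sC$ to extend $\alpha$ to a regular section of $L$ on $V$, and then to a rational section $s$ of $L$ on all of $\sC$; properness of $\sC \to S$ and affineness of $p|_U$ ensure that the horizontal part of the divisor of $s$ is finite over $S$ and defines an admissible modulus cycle whose class under $\phi$ is $(L, \alpha)$. For injectivity, if $\phi(Z) = 0$ then $[Z] = \Div(f)$ on $\sC$ with $f \equiv 1$ on $D$, and the standard construction of a $1$-chain from a rational function (via its graph in $\sC \times_k \A^1$) produces a $W \in C_1(\sC/S, D)$ with $\partial W = Z$; the modulus condition on $W$ is guaranteed precisely by the condition $f|_D = 1$.

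The main obstacle will be the vanishing of $H_i^S(\sC/S, D)$ for $i \geq 1$. The approach is to mimic Suslin--Voevodsky's acyclicity argument for $0$-cycles on smooth curves by constructing explicit contracting chain homotopies, but every cycle appearing in such a homotopy must itself satisfy the modulus condition along $D$. Verifying this requires delicate bookkeeping of orders of vanishing along $D$: in the non-modulus case the verification is automatic, here it is the central technical difficulty. The affine neighborhood hypothesis on $D$ is essential for constructing the required rational sections by purely local calculations, while the affineness of $p|_U$ ensures that the resulting cycles are finite over $S$.
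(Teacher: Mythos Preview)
Your treatment of $H_0^S(\sC/S,D)$ is in the right spirit, although several details are imprecise (e.g.\ a generator of $C_0(\sC/S,D)$ is an integral closed subscheme $Z\subset U\subset\sC$, not a cycle on $\sC\times_k S$; and a $1$-chain does not directly give a rational function on $\sC$ but rather a polynomial over a semilocalization of $\sO_\sC$ whose values at the two faces yield the function). The paper handles this step by identifying $C_0(\sC/S,D)$ with ${\rm Div}(\sC,D)$ and then showing, via an explicit polynomial description of $C_1(\sC/S,D)$, that the image of the boundary is exactly $\Div(G(\sC,D))$; this is close to what you sketch once the imprecisions are fixed.

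The genuine gap is your plan for the vanishing in degrees $i\ge 1$. The Suslin--Voevodsky argument you want to mimic rests on the \emph{homotopy invariance} of the relative Picard group: one needs
\[
{\rm Pic}(\sC\times\A^1,\,D\times\A^1)\;\cong\;{\rm Pic}(\sC,D),
\]
and more generally that this group is constant along the cubes $\square^n$. This is true when $D$ is reduced, but it \emph{fails} for non-reduced $D$: already for $\sC=\P^1$ with $D=2\cdot\{0\}$ one has $H^0(\sC,\sO_{\sC|D}^\times)$ contributing a $\G_a$-piece which is not $\A^1$-invariant. Since Theorem~\ref{thm-intro} is stated for arbitrary effective $D$, a contracting-homotopy argument built on this invariance cannot work, and the ``delicate bookkeeping of orders of vanishing'' you anticipate is not a technical nuisance but the reflection of a genuine obstruction.

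The paper proceeds along a completely different route, reminiscent of Binda--Saito rather than Suslin--Voevodsky. One passes to the normalized cubical complex $NC_\bullet(\sC/S,D)$ using the \emph{extended} cube structure (which furnishes the multiplication $\mu$ needed for Levine's homotopy equivalence), and then establishes an explicit isomorphism of monoids
\[
\tilde{C}_n(\sC/S,D)^{\rm eff}\;\xrightarrow{\ \simeq\ }\;C_0(\sC/S,D)^{\rm eff}\oplus (Q_n/A^\times),
\]
where $A$ is the semilocal ring of $\sC$ at the generic points of $D$ and $Q_n\subset A[t_1,\dots,t_n]$ is a set of polynomials singled out by combinatorial conditions on their coefficients encoding the modulus inequality. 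The boundary maps translate into explicit substitutions $t_j\mapsto 1,\infty$, and the vanishing of $H_i$ for $i\ge 2$ is obtained by writing down, for each normalized cycle $f/g\in NQ_n$, an explicit preimage in $NQ_{n+1}$ of the form
\[
\frac{f(t_2,\dots,t_{n+1})}{\rho\bigl(t_1 f(t_2,\dots,t_{n+1})-\text{(correction)}\bigr)}.
\]
The case $i=1$ is handled separately by identifying $NQ_1/\im(\delta_2)\to NQ_0$ with $G(\sC,D)\xrightarrow{\Div}{\rm Div}(\sC,D)$. None of this uses, or could be replaced by, an $\A^1$-homotopy argument.
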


By comparison with \cite{SuVo96},
we find that the canonical map 
$H_i^S(\sC/S, D) \to H_i^S(U/S)$
is bijective if $D$ is reduced.
Our proof of Theorem \ref{thm-intro} is,
however, totally different from \cite{SuVo96} (or from \cite{Li}).
Indeed, in loc. cit. the starting point of the proof (for reduced $D$)
is the homotopy invariance of ${\rm Pic}(\sC, D)$, 
which is no longer valid if $D$ is not reduced.
Our proof is rather reminiscent of \cite[Thm. 4.3]{BS14}.

Actually we shall show a slightly more general result,
see Theorem \ref{thm:RSHcurves}.
We expect that our result will be interpreted
as a computation of ``motives with modulus'' for curves,
just like the case for ``motives without modulus''.

\subsection*{Acknowledgements}
The authors thank Florian Ivorra for interesting discussions around Theorem \ref{thm-intro} and 
H\a'el\a`ene Esnault for inviting the second author to the FU Berlin 
with her ERC Advanced Grant 226257.
As noted above, the definition of the Suslin homology with modulus
is originally due to Shuji Saito.
We thank him for his permission to use his idea.

\begin{convention}\label{convention}
By convention a $k$-scheme is a scheme which is separated and of finite type over a field $k$.
We denote by ${\rm CDiv(X)}$ the group of Cartier divisors on $X$. If $D$ is an effective Cartier divisor
we denote by $|D|$ its support. If $X$ and $Y$ are $k$-schemes we write $X\times Y=X\times_k Y$. 
\end{convention}

\section{Relative finite correspondences}\label{sec:rel-cor}\setcounter{subsection}{1}

\begin{no}\label{no:Cartier-pb}
We recall a criterion for the pullback of an Cartier divisor to exist. 
Let $X$ be a $k$-scheme and $D$ a Cartier divisor on $X$.
Let $f:Y\to X$ be a morphism of $k$-schemes and assume
that $f({\rm Ass}(\sO_X))\subset X\setminus |D|$,
              where ${\rm Ass}(\sO_X)$ denotes the set of associated points of $X$,
             (e.g. $Y$ is integral and $f(Y)$ is not contained in $|D|$).
Then the pullback $f^*D$ exists as a Cartier divisor.
(Indeed, we may assume that $D$ is effective and then it suffices to show that the pullback 
of a local equation of $D$ to $Y$ is  a regular element. This is a local question
so let $y\in Y$ be a point, $x=f(y)$ and $d\in\sO_{X,x}$ be a local equation of $D$. 
By assumption the image of $d$ under $f^*: \sO_{X,x}\to \sO_{Y,y}$ is not contained in an associated prime 
of $\sO_{Y,y}$, hence is regular, see \cite[6.]{Mat89}.)
\end{no}

The following definition is reminiscent of the modulus condition introduced in \cite{BE03} and
in its more general form in \cite{BS14}.

\begin{defn}\label{defn:refined-ineq}
Let $X$ be a $k$-scheme and  $D$, $E$ effective Cartier divisors on $X$.
Then we write
\[D\prec E :\Longleftrightarrow \nu^*D\le \nu^* E,\]
where $\nu: \tilde{X}\to X$ is the normalization and $\nu^*D$, $\nu^*E$ is the pullback of Cartier divisors.
(Recall that $\nu$ is the composition $\sqcup_i \tilde{X}_i\xr{\sqcup\nu_i} \sqcup_i X_i\to X_{\rm red}\xr{\iota} X$,
where $\iota$ is the closed immersion of the reduced scheme underlying $X$,
 the $X_i$ are the irreducible components (with  reduced scheme structure) of $X_{\rm red}$ and
$\nu_i:\tilde{X}_i\to X_i$ is the normalization of $X_i$ in its function field.)
\end{defn}

Versions of the following proposition can be found  e.g. in \cite[2.]{KP}.
\begin{prop}\label{prop:refined-ineq}
Let $X$ be a $k$-scheme and  $D$, $E$ effective Cartier divisors on $X$.
\begin{enumerate}
\item\label{prop:refined-ineq1} $D\le E\Longrightarrow D\prec E$.
\item\label{prop:refined-ineq2} If $F$ is another effective Cartier divisor on $X$, then
         \[D\prec E \text{ and } E\prec F \Longrightarrow D\prec F.\]
\item\label{prop:refined-ineq3} Let $n$ be a positive integer. Then 
       \[n D\prec nE\Longleftrightarrow D\prec E.\]
\item\label{prop:refined-ineq4}
           Let $f:Y\to X$ be a morphism of $k$-schemes and assume that $f({\rm Ass}(\sO_X))\subset X\setminus(|E|\cup |D|)$.
          Then the pullbacks $f^*D$ and $f^*E$ exist as effective Cartier divisors (see \ref{no:Cartier-pb}) and 
            \[D\prec E\Longrightarrow f^*D\prec f^*E.\]
          Furthermore if $X$ is irreducible and $f:Y\to X$ is proper and  surjective 
           then this '$\Longleftarrow$' implication also  holds.
\item\label{prop:refined-ineq5}
        Let $Z$ be a normal $k$-scheme and $g:X\to Z$ be a finite and surjective $k$-morphism.
          Then the pushforwards $g_*D$ and $g_*E$ exist as effective Cartier Divisors and 
          we have
         \[D\prec E \Longrightarrow g_*D\prec g_*E.\] 
\end{enumerate}
\end{prop}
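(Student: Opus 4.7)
The overall strategy is to reduce each assertion to a comparison of Weil divisors on the integral normal components $\tilde X_i$ of $\tilde X$, where $D\prec E$ becomes componentwise inequality of multiplicities at prime divisors. Parts \eqref{prop:refined-ineq1}--\eqref{prop:refined-ineq3} then follow by short formal arguments: \eqref{prop:refined-ineq1} because $\nu^*$ is a group homomorphism preserving effectivity; \eqref{prop:refined-ineq2} by transitivity of $\le$ on each $\tilde X_i$; and \eqref{prop:refined-ineq3} because on each $\tilde X_i$, Cartier and Weil divisors coincide and form a free abelian group on prime divisors, where multiplication by a positive integer both preserves and reflects the order.

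For the forward implication in \eqref{prop:refined-ineq4}, I would first apply \ref{no:Cartier-pb} to see that $f^*D$ and $f^*E$ are defined; the same criterion, applied to $f\circ\nu_Y$ together with the assumption on associated points, shows that the generic point of any component $\tilde Y_j$ of $\tilde Y$ maps outside $|D|\cup|E|$, so the further pullback to $\tilde Y$ makes sense. Fix a prime divisor $P$ of $\tilde Y_j$ with generic point $p$ lying over $x\in X$, and choose local equations $d,e\in\sO_{X,x}$ of $D,E$: the hypothesis $D\prec E$ is equivalent to $e/d\in\sO_{\tilde X,\xi}$ for every $\xi\in\nu_X^{-1}(x)$, so $e/d$ is integral over $\sO_{X,x}$. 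Since $\sO_{\tilde Y,p}$ is a DVR, hence integrally closed in its fraction field, the image of $e/d$ automatically lies in $\sO_{\tilde Y,p}$, which translates to the required inequality $\nu_Y^*f^*D\le \nu_Y^*f^*E$ at $P$.

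For the converse direction in \eqref{prop:refined-ineq4}, the proper surjective map $f\circ\nu_Y:\tilde Y\to X$ together with the irreducibility of $X$ ensures that some component $\tilde Y_{j_0}$ surjects onto $X$; as $\tilde Y_{j_0}$ is normal integral, the universal property of normalization lets it factor as $\tilde Y_{j_0}\to\tilde X\to X$, and the map $\pi:\tilde Y_{j_0}\to\tilde X$ is proper (target separated over $X$) and dominant, hence surjective. For each prime divisor $P$ of $\tilde X$, properness and surjectivity force $\pi^{-1}(P)$ to contain an irreducible component $P'$ with $\pi(P')=P$, producing a DVR $\sO_{\tilde Y_{j_0},\eta_{P'}}$ dominating $\sO_{\tilde X,\eta_P}$ with some positive ramification index $e'$. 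Since $v_{\eta_{P'}}(\pi^*(e/d))=e'\cdot v_{\eta_P}(e/d)$, the hypothesis that the first is nonnegative forces the second to be nonnegative as well, giving $D\prec E$. The main technical obstacle I expect is justifying the existence of a dominating component $\tilde Y_{j_0}$ cleanly and verifying that $\pi$ factors through $\tilde X$, which requires irreducibility of $X$.

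For \eqref{prop:refined-ineq5}, normality of $Z$ identifies Cartier and Weil divisors on $Z$, so $\prec$ reduces to $\le$. I would express the pushforward through the normalization as $g_*D=(g\circ\nu_X)_*(\nu_X^*D)=\sum_Q v_Q(\nu_X^*D)\cdot[k(Q):k(g\nu_X(Q))]\cdot[g\nu_X(Q)]$, the sum ranging over prime divisors $Q$ of $\tilde X$; the hypothesis $D\prec E$ gives $v_Q(\nu_X^*D)\le v_Q(\nu_X^*E)$ for every such $Q$, whence $g_*D\le g_*E$ termwise as Weil, hence Cartier, divisors on $Z$.
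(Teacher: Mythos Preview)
Your argument is correct and essentially matches the paper for parts \eqref{prop:refined-ineq1}--\eqref{prop:refined-ineq3}, the forward direction of \eqref{prop:refined-ineq4}, and \eqref{prop:refined-ineq5}; in each case both you and the paper reduce to the integral case and exploit that on a normal scheme Cartier divisors inject into Weil divisors, with integrality of $e/d$ carrying the forward pullback statement. For \eqref{prop:refined-ineq5} the paper phrases the key identity $g_*D=(g\circ\nu)_*\nu^*D$ via the norm formula $g_*\Div(d)=\Div(\Nm(d))$, whereas you phrase it as a Weil-divisor pushforward; these are two sides of the same computation, though you should make explicit that the equality $g_*D=(g\circ\nu)_*\nu^*D$ ultimately comes from $k(X_i)=k(\tilde X_i)$ and multiplicativity of the norm.

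The one genuinely different step is the converse in \eqref{prop:refined-ineq4}. The paper's argument is shorter and more global: after reducing to $X$ and $Y$ normal integral and $X=\Spec A$ affine, the hypothesis $f^*D\le f^*E$ gives $e/d\in H^0(Y,\sO_Y)$; properness of $f$ makes $H^0(Y,\sO_Y)$ a finite $A$-module, so $e/d$ is integral over $A$, hence lies in $A$ by normality. Your route instead produces, for each height-one prime of $\tilde X$, a dominating DVR on some component of $\tilde Y$ via a proper surjective factorization $\tilde Y_{j_0}\to\tilde X$, and transfers the valuation inequality back through the ramification index. Both are valid; the paper's approach avoids the fiberwise dimension bookkeeping (your ``main technical obstacle'') by packaging properness into the single finiteness statement for global sections, while your approach has the virtue of being entirely local and valuation-theoretic, closer in spirit to how the relation $\prec$ is actually used later in the paper.
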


\begin{rmk}
Warning: $D\prec E$ and $E\prec D$ does not imply $D=E$. 
            If $X$ is reduced, the kernel of $\nu^*:{\rm CDiv}(X)\to {\rm CDiv}(\tilde{X})$ is given by
         $H^0(X, \nu_*\sO_{\tilde{X}}^\times/\sO_{X}^\times)$, see \cite[Prop (21.8.5)]{EGAIV4}.
                 \end{rmk}

\begin{proof}
Let $X_{\rm red}=\cup_i X_i$ be the decomposition into irreducible components (with reduced scheme structure).
Then by definition of the normalization we have 
$D\prec E$ iff $D_{|X_i}\prec E_{|X_i}$, for all $i$. Hence we can assume that the schemes $X$, $Y$ and $Z$
in the statements are integral $k$-schemes. The statements \eqref{prop:refined-ineq1} and \eqref{prop:refined-ineq2}
are immediate and \eqref{prop:refined-ineq3} follows from the fact that on a normal scheme
the natural map from the group of Cartier divisors to the group of Weil divisors is injective.
We prove '$\Longrightarrow$' of \eqref{prop:refined-ineq4}. The question is local on $Y$.
We may therefore assume that $f:Y\to X$ is induced by a homomorphism between integral $k$-algebras
$\varphi: A\to B$ and $D=\Div (d)$, $E=\Div(e)$, with $d,e\in A\setminus\{0\}$. Then
$D\prec E$ means that the element $e/d\in k(X)^\times$ is integral over $A$, i.e. there exists
 an $n\ge 1$ and $a_i\in A$ with
\begin{align*}
(e/d)^n+a_{n-1} (e/d)^{n-1}+\ldots+a_0=0 \quad \text{in } k(X)\\
\Longleftrightarrow e^n+a_{n-1}d e^{n-1}+\ldots+ a_0 d^n=0 \quad \text{in } A.
\end{align*}
By assumption $\varphi(e),\varphi(d)\in B\setminus\{0\}$.
Applying $\varphi$ to the equation above yields that  $\varphi(e)/\varphi(d)\in k(Y)^\times$ is integral over $B$.
Hence $f^*D\prec f^* E$. 
For the other implication notice that  $f:Y\to X$  also induces a proper, and surjective 
map between the normalizations of $X$ and $Y$. Therefore we can assume that $X$ and $Y$ are normal and integral.
Furthermore the question is local on $X$. Hence we can assume $X=\Spec A$ is affine and
$D=\Div(d)$, $E=\Div(e)$ as above. 
Since $Y$ is normal,  $f^*D\prec f^* E$ is equivalent to $f^*\sO_{X}(D-E)\subset \sO_Y$.
Applying $H^0(Y,-)$ we obtain $e/d\in H^0(Y,\sO_Y)$. Since $f$ is proper, $H^0(Y,\sO_Y)$
is a finite $A$-module. It follows that $e/d$ is integral over $A$. The normality of $A$ gives $e/d\in A$,
i.e. $D\le E$.

Finally for \eqref{prop:refined-ineq5} we first notice that the Cartier divisors
$g_*D$ and $g_*E$ exist by \cite[Def (21.5.5)]{EGAIV4}. If $U\subset Z$ is open and
on $g^{-1}(U)$ we have $D=\Div(d)$   then 
$g_*D=\Div(\Nm(d))$, where $\Nm_{k(X)/k(Z)}:k(X) \to k(Z)$ is the norm map.
It follows from this formula that we have 
$(g\circ\nu)_*\nu^*D=g_*D$, where $\nu:\tilde{X}\to X$ is the normalization.
Since the pushforward $(g\circ\nu)_*$ preserves the ordering on ${\rm CDiv}(X)$ the statement follows.
\end{proof}

\begin{defn}\label{defn:rel-fin-corr}
Let $X$ and $Y$ be $k$-schemes and $D$ and $E$ effective Cartier divisors
on $X$ and $Y$, respectively. Assume that $U=X\setminus|D|$ and $V=Y\setminus|E|$ are smooth
and equidimensional over $k$.
Denote by $\Cor(U, V)$ the group of finite correspondences
from $U$ to $V$, i.e. the free abelian group generated on prime correspondences from $U$ to $V$, i.e 
integral closed subschemes $S\subset U\times V$ which are finite and surjective over a connected component of $U$
(see e.g. \cite[Def 1.1]{MVW}).
The group of finite correspondences from $(X,D)$ to $(Y,E)$ is by definition the subgroup 
\[\Cor((X,D), (Y,E))\subset \Cor(U,V)\]
generated by those prime correspondences $S\subset U\times V$ which satisfy
\eq{defn:rel-fin-corr1}{E_{\ol{S}}\prec D_{\ol{S}},}
where $\ol{S}\subset X\times Y$ is the closure of $S$ and $D_{\ol{S}}$ and $E_{\ol{S}}$ denote
the pullback of $D$ and $E$ along the projections $\ol{S}\inj X\times Y\to X$ and 
$\ol{S}\inj X\times Y\to Y$, respectively, see \ref{no:Cartier-pb}.
\end{defn}

\begin{rmk}
There is a choice in  \eqref{defn:rel-fin-corr1}, one could also ask for the other inequality $\succ$.
Our choice is made in such a way that in case $X$ is {\em smooth} and connected
a finite correspondence $\alpha\in \Cor((X,D), (Y,E))$
induces maps
\[\alpha^*:H^i(Y, \sO_Y(E))\to H^i(X,\sO_X(D))\]
and
\[\alpha_*: H^i(X,\pi_X^!(k)\otimes_{\sO_X}\sO_X(-D))\to H^i(Y,\pi^!_Y(k)\otimes_{\sO_Y}\sO_Y(-E)),\]
where $\pi_X: X\to \Spec k$ and $\pi_Y: Y\to \Spec k$ are the structure maps.
(Notice that since $X$ is smooth we have $\pi_X^!k=\Omega^{\dim X}_X[\dim X]$.)
Indeed if $\alpha=S\subset U\times V$ is a prime correspondence these maps are defined as follows:
Let $\ol{S}\subset X\times Y$ be the closure of $S$ and $\nu:\tilde{S}\to \ol{S}$ its normalization.
Since the  natural map induced by projection $p_X:\tilde{S}\to X$ is surjective and generically finite we
have  a natural map $c_{p_X}:\sO_{\tilde{S}}\to p_X^!\sO_X$ at our disposal, see e.g. \cite[Prop 2.6]{CR15};
by adjointness we obtain a map $p_{X*}:Rp_{X*}\sO_{\tilde{S}}\to \sO_X$.
Then we define  $\alpha^*$ as the composition
\begin{align*}
H^i(Y,\sO_Y(E)) & \xr{p_Y^*} H^i(\tilde{S},\sO_{\tilde{S}}(E_{\tilde{S}}))\\
                       &\xr{\eqref{defn:rel-fin-corr1}}
         H^i(\tilde{S},\sO_{\tilde{S}}(D_{\tilde{S}}))
       \cong H^i(X, (Rp_{X*}\sO_{\tilde{S}})\otimes_{\sO_X}\sO_X(D))\\
          & \xr{p_{X*}} H^i(X, \sO_X(D)).
\end{align*}
Similarly, we define $\alpha_*$ as  the composition
\begin{align*}
H^i(X,\pi_X^!(k)\otimes_{\sO_X}\sO_X(-D))&\xr{\id\to Rp_{X*}Lp_X^*} 
          H^i(\tilde{S}, p_{X}^*(\pi_X^!k)\otimes_{\sO_{\tilde{S}}}\sO_{\tilde{S}}(-D_{\tilde{S}}))\\
&\xr{c_{p_X}} H^i(\tilde{S}, p^!_X(\sO_X)\otimes_{\sO_{\tilde{S}}}p_X^*(\pi_X^!k)
                                         \otimes_{\sO_{\tilde{S}}}\sO_{\tilde{S}}(-D_{\tilde{S}}))\\
 & \cong   H^i(\tilde{S}, \pi_{\tilde{S}}^!(k) \otimes_{\sO_{\tilde{S}}}\sO_{\tilde{S}}(-D_{\tilde{S}}))\\
& \xr{\eqref{defn:rel-fin-corr1}} 
    H^i(\tilde{S}, \pi_{\tilde{S}}^! (k) \otimes_{\sO_{\tilde{S}}}\sO_{\tilde{S}}(-E_{\tilde{S}}))\\
& \cong H^i(Y, Rp_{Y*}p_Y^!(\pi_Y^!k)\otimes_{\sO_Y} \sO_Y(-E))\\
& \xr{Rp_{Y*}p_Y^!\to \id}H^i(Y, \pi_Y^!(k)\otimes_{\sO_Y} \sO_Y(-E)).
\end{align*}
Here for the isomorphism in the third row we use 
$\pi_{\tilde{S}}^!\cong p_X^!\pi_X^!\cong p_X^!(\sO_X)\otimes_{\sO_X}^L Lp_X^*$.
If $\pi_X$ and $\pi_Y$ are projective, then $\alpha_*$ and $\alpha^*$ are dual to each other.
If we chose in \eqref{defn:rel-fin-corr1} this '$\succ$' relation, then we have to switch the signs 
in front of $D$ and $E$ for the maps above.
\end{rmk}

\begin{defn}\label{defn:rel-corr-support}
Let $B$ be a $k$-scheme, $X, Y$  $B$-schemes and 
$D,E$ effective Cartier divisors on $X,Y$, respectively.
Assume that $U=X\setminus|D|$, $V=Y\setminus|E|$ are smooth and equidimensional over $k$.
Then we denote by 
\[\Cor_B((X,D),(Y,E))\]
the subgroup of $\Cor((X,D), (Y,E))$ which consists of all cycles whose support is contained in the closed subscheme
$X\times_B Y\subset X\times Y$. 
\end{defn}

\begin{prop}\label{prop:comp-rel-corr}
Let $B$ be a $k$-scheme, $X, Y,Z$ $B$-schemes and 
$D,E,F$ effective Cartier divisors on $X,Y,Z$, respectively.
Assume that $U=X\setminus|D|$, $V=Y\setminus|E|$ and $W=Z\setminus |F|$ are smooth and equidimensional over $k$
and that $Y$ is {\em proper} over $B$.
Then the usual composition of finite correspondences
$\Cor(U, V)\times \Cor(V,W)\to \Cor(U, W)$, $(\alpha,\beta)\mapsto \beta\circ\alpha$
(see \cite[1.]{MVW})
restricts to a morphism
\eq{prop:comp-rel-corr1}{\Cor_B((X,D),(Y,E))\times \Cor_B((Y,E), (Z,F))\xr{\circ} \Cor_B((X,D), (Z,F)).}
\end{prop}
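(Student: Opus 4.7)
The plan is to unwind the standard construction of the composition $\beta\circ\alpha$ and track both the support condition and the modulus condition \eqref{defn:rel-fin-corr1} through it, leveraging the functoriality properties of $\prec$ established in Proposition \ref{prop:refined-ineq}. By bilinearity it suffices to treat a single pair of prime correspondences $S\in \Cor_B((X,D),(Y,E))$ and $T\in \Cor_B((Y,E),(Z,F))$, and recall that $T\circ S=p_{13*}\bigl((S\times W)\cdot (U\times T)\bigr)$, where $p_{13}:U\times V\times W\to U\times W$ is the projection. Fix an irreducible component $R'$ of the intersection cycle, let $R=p_{13}(R')$ be the corresponding prime component of $T\circ S$, and write $\ol{R'}\subset X\times Y\times Z$, $\ol{R}\subset X\times Z$ for their Zariski closures. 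We must check (i) $\ol{R}\subset X\times_B Z$ and (ii) $F_{\ol{R}}\prec D_{\ol{R}}$.

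For (i), the hypotheses on $S$ and $T$ yield $\ol{S}\subset X\times_B Y$ and $\ol{T}\subset Y\times_B Z$, whence $\ol{R'}\subset (\ol{S}\times Z)\cap(X\times\ol{T})\subset X\times_B Y\times_B Z$. Properness of $Y$ over $B$ makes $p_{13}:X\times_B Y\times_B Z\to X\times_B Z$ proper; its restriction to $\ol{R'}$ has closed irreducible image containing $R$, and the generic finiteness of $R'\to R$ forces this image to be exactly $\ol{R}$. In particular $\ol{R}\subset X\times_B Z$.

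On $\ol{R'}$, the generic point lies in $U\times V\times W$, so the projections $\ol{R'}\to\ol{S}$ and $\ol{R'}\to\ol{T}$ avoid the supports of $D_{\ol{S}},E_{\ol{S}},E_{\ol{T}},F_{\ol{T}}$ at associated points. Proposition \ref{prop:refined-ineq}\eqref{prop:refined-ineq4} (``$\Longrightarrow$'') then converts $E_{\ol{S}}\prec D_{\ol{S}}$ and $F_{\ol{T}}\prec E_{\ol{T}}$ into $E_{\ol{R'}}\prec D_{\ol{R'}}$ and $F_{\ol{R'}}\prec E_{\ol{R'}}$, and transitivity \eqref{prop:refined-ineq2} gives $F_{\ol{R'}}\prec D_{\ol{R'}}$.

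The main point is to descend this inequality from $\ol{R'}$ to $\ol{R}$ along $q:=p_{13}|_{\ol{R'}}:\ol{R'}\to\ol{R}$. By (i), $q$ is proper and surjective onto the integral scheme $\ol{R}$, and functoriality of Cartier pullback identifies $F_{\ol{R'}}=q^*F_{\ol{R}}$ and $D_{\ol{R'}}=q^*D_{\ol{R}}$. The ``$\Longleftarrow$'' implication of Proposition \ref{prop:refined-ineq}\eqref{prop:refined-ineq4} then produces (ii). This last descent, which is the only genuinely nontrivial step, is precisely where the properness assumption on $Y\to B$ is essential; the rest of the proof is a formal manipulation of the relation $\prec$.
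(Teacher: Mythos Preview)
Your proof is correct and follows essentially the same route as the paper's: reduce to prime correspondences, pull back the two modulus inequalities to an irreducible component of $S\times_V T$ via Proposition~\ref{prop:refined-ineq}\eqref{prop:refined-ineq4}, chain them by transitivity~\eqref{prop:refined-ineq2}, and then descend along the proper surjective projection to $\ol{R}$ using the converse direction of~\eqref{prop:refined-ineq4}. The only cosmetic difference is that you spell out the support condition $\ol{R}\subset X\times_B Z$ separately, whereas the paper leaves this implicit by working over $B$ throughout; your remark that ``generic finiteness of $R'\to R$ forces this image to be exactly $\ol{R}$'' is slightly misphrased (what is actually used is that the generic point of $\ol{R'}$ maps to that of $R$, so the closed irreducible image equals $\ol{R}$), but the conclusion is correct.
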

\begin{proof}
Let $S\subset U\times_B V$ and $T\subset V\times_B W$ be prime correspondence  from
$(X,D)$ to $(Y,E)$ and  from $(Y,E)$ to $(Z,F)$, respectively. 
Let $R\subset S\times_V T$ be an irreducible component 
and denote by $p:S\times_V T\subset U\times_B V\times_B W \to U\times_B W$ the natural  map induced by projection. 
(It is finite.) Then the prime correspondences appearing in $T\circ S$ are $\{p(R)\}_R$, where $R$ runs through all
irreducible components of $S\times_V T$. Hence we have to show that $p(R)$ lies in $\Cor_B((X,D), (Z,F))$.
Denote by $\ol{S}\subset X\times_B Y\subset X\times Y$ and $\ol{T}\subset Y\times_B Z\subset Y\times Z$ 
the closure of  $S$ and $T$, respectively,
and by $\ol{R}\subset \ol{S}\times_Y\ol{T}$ the closure of $R$. 
By definition we have $E_{\ol{S}}\prec D_{\ol{S}}$ and $F_{\ol{T}}\prec E_{\ol{T}}$.
By the first part of \eqref{prop:refined-ineq4} of Proposition \ref{prop:refined-ineq} we have
\[F_{\ol{R}}\prec E_{\ol{R}}\prec D_{\ol{R}}.\]
Denote by $\ol{p}: \ol{S}\times_Y\ol{T}\subset X\times_B Y \times_B Z \to X\times_B Z$ the natural map induced by 
projection. Since $Y$ is proper over $B$ so is $\ol{p}$. 
Hence $\ol{p}_{|\ol{R}}: \ol{R}\to \ol{p(R)}$ is proper, surjective and generically finite.
By the second part of \eqref{prop:refined-ineq4} of Proposition \ref{prop:refined-ineq} we get
\[F_{\ol{p(R)}}\prec D_{\ol{p(R)}},\]
i.e. $p(R)\in \Cor_B((X,D), (Z,F))$.
\end{proof}

\begin{cor}\label{cor:comp-rel-corr}
The composition defined in \eqref{prop:comp-rel-corr1} is associative in the obvious sense.
Furthermore, if $X$ is a $B$-scheme and $D$ is an effective Cartier divisor on $X$ 
such that $X\setminus|D|$ is smooth and equidimensional over $k$, 
then the graph of the diagonal $\Delta_X\subset X\times_B X$
naturally defines an element in $\Cor_B((X,D),(X,D))$. If $X$ is proper over $B$, then
\[\Cor_B((X,D),(Y,E))\xr{\Delta_X\circ } \Cor_B((X,D),(Y,E))\]
is the identity, where $Y$ is a $B$-scheme and $E$ is an effective Cartier divisor $E$ on $Y$
such that $Y\setminus|E|$ is smooth and equidimensional over $k$. 
If $Y$ is proper over $B$, then similarly $\circ \Delta_Y$ is the identity.
\end{cor}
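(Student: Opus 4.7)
The plan is to derive each of the three assertions from its classical analogue in the category of finite correspondences $\Cor(-,-)$. By Proposition \ref{prop:comp-rel-corr}, the group $\Cor_B((X,D),(Y,E))$ sits inside $\Cor(U,V)$, and the modulus composition is by construction the restriction of the classical composition under the given properness hypothesis on the middle object. Hence any identity in $\Cor(U,V)$ whose ingredients all lie in the appropriate $\Cor_B$-subgroups automatically holds in the modulus setting. In particular, associativity is inherited immediately from the associativity of the usual composition (cf.\ \cite[{\S}1]{MVW}): one only has to verify, via Proposition \ref{prop:comp-rel-corr}, that each intermediate triple composition again lies in the relevant $\Cor_B$-group.

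For the claim that the diagonal $\Delta_X$ defines an element of $\Cor_B((X,D),(X,D))$, I first reduce to the irreducible case. Writing $X_{\rm red}=\bigcup_i X_i$ with $X_i$ the integral irreducible components and setting $U_i:=X_i\cap U$, I define the diagonal cycle by $\Delta_X:=\sum_i[\Delta_{U_i}]$. Each $\Delta_{U_i}\subset U_i\times U_i\subset U\times_B U$ is integral and finite surjective over $U_i$, which is a connected component of the smooth scheme $U$, hence is a prime correspondence. Its closure in $X\times X$ is $\Delta_{X_i}\subset X\times_B X$, and under the canonical identification $\Delta_{X_i}\cong X_i$ the two projections $\Delta_{X_i}\to X$ coincide with the inclusion $X_i\hookrightarrow X$. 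Therefore the two pullbacks of $D$ along these projections agree as Cartier divisors on $\Delta_{X_i}$, and the modulus condition $D_{\Delta_{X_i}}\prec D_{\Delta_{X_i}}$ holds trivially by Proposition \ref{prop:refined-ineq}\eqref{prop:refined-ineq1}.

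Finally, for the identity law, suppose $X$ is proper over $B$ and $\alpha\in\Cor_B((X,D),(Y,E))$. Proposition \ref{prop:comp-rel-corr} guarantees $\alpha\circ\Delta_X\in\Cor_B((X,D),(Y,E))$, while the classical identity law gives $\alpha\circ\Delta_X=\alpha$ in $\Cor(U,V)$; since $\Cor_B$ embeds in $\Cor$, the same equality holds in $\Cor_B((X,D),(Y,E))$. The case of $\Delta_Y$ with $Y$ proper over $B$ is symmetric. The only substantive step in the entire argument is the modulus verification for the diagonal, but it is trivial once one notes that the two projections $\Delta_{X_i}\to X$ literally coincide; everything else is a formal consequence of the classical theory.
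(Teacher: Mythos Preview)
Your proof is correct and follows essentially the same approach as the paper's own proof, which simply states that everything follows immediately from the corresponding properties of the usual composition of finite correspondences between smooth $k$-schemes. You have supplied the natural details behind that one-line argument, in particular the (trivial) verification of the modulus condition for the diagonal, but the strategy is identical.
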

\begin{proof}
This follows immediately from the corresponding properties of the composition for the usual finite
correspondences between smooth $k$-schemes.
\end{proof}

\begin{rmk}\label{rmk:alt-rel-corr}
Let $f:X\to Y$ be an alteration between integral $B$-schemes and assume that
there is an effective Cartier divisor $E$ on $Y$ such that $U:=X\setminus|f^*E|$  and 
$V:=Y\setminus E$ are smooth and $f$ restricts to a finite map $f_{|U}: U\to V$.
Denote by $\Gamma\subset U\times V$ the graph of $f_{|U}$ and by
$\Gamma^t\subset V\times U$ its transpose. 
Then we have 
\[\Gamma \in\Cor_B((X,D), (Y, E)),\quad \text{for all } D\succ f^*E \]
and 
\[^t \Gamma\in \Cor_B((Y,E'),(X,f^*E)),\quad \text{for all }  E'\succ E.\]
If we take $E'= E$ and $D=f^*E$ we get
\[^t\Gamma\circ \Gamma= d\cdot \Delta_X\quad \text{in }\Cor_B((X, f^*E), (X,f^*E)),\]
where $d$ is the generic degree of $f$. 
If $f$ is birational, i.e. $d=1$ we furthermore get
\[\Gamma\circ {}^t\Gamma =\Delta_Y\quad \text{in }\Cor_B((Y, E),(Y,E)).\]
This all follows immediately from the definitions and the corresponding properties for
the usual finite correspondences. 
\end{rmk}

\begin{defn}\label{defn:cat-pairs}
Let $B$ be a $k$-scheme.
We define  $\Sm_B^*\Cor$ to be the category with objects the pairs 
$(X\to B,D)$, where $X\to B$ is a proper $k$-morphism, $D$ is an effective Cartier divisor on $X$ and
$X\setminus|D|$ is smooth and equidimensional over $k$;  the morphisms are by definition
\[\Hom_{\Sm_B^*\Cor}((X\to B,D),(Y\to B,E)):=\Cor_B((X,D),(Y,E)).\]
It follows from Proposition \ref{prop:comp-rel-corr} and Corollary \ref{cor:comp-rel-corr} that this defines
 a category.  We set $\Sm^*\Cor:=\Sm_{\Spec k}^*\Cor$.

For $(X\to B,D), (Y\to B,E)\in\Sm_B^*\Cor$ with $X\setminus|D|$ and $Y\setminus|E|$ 
{\em smooth and equidimensional over $B$} we set
\[(X,D)\times_B (Y,E)= (X\times_B Y, p_X^*D+p_Y^*E),\]
where $p_X: X\times_B Y\to X$ and $p_Y:X\times_B Y\to Y$ are the projections.
It is clear that with this definition the full subcategory of $\Sm_B^*\Cor$ consisting of those pairs
$(X\to B,D)$ with $X\setminus D$ smooth and equidimensional over $B$ has the structure of a symmetric monoidal category 
with the unit element given by $(B, 0)$.  In particular, $\Sm^*\Cor$ has the structure of a symmetric monoidal category.
 \end{defn}

\begin{no}\label{ecubes}
Recall from \cite[1.1]{Le09} that {\bf Cube} is the category with objects given by the sets 
$\ul{n}:=\{0,1\}^n$, $n=0,1,2,\ldots$, and the morphisms are generated by:
\begin{enumerate}
\item  Inclusions $\eta_{n,i,\e}:\ul{n}\inj \ul{n+1}$, 
            $(\e_1,\ldots,\e_n)\mapsto (\e_1,\ldots, \e_{i-1},\e,\e_i,\ldots,\e_n)$, $n\ge 0$, $i\in [0,n+1]$, 
              $\e\in \{0,1\}$.
\item Projections $p_{n,i}:\ul{n}\to \ul{n-1}$, $(\e_1,\ldots, \e_n)\mapsto (\e_1,\ldots, \e_{i-1}, \e_{i+1},\ldots, \e_n)$,
                        $n\ge 1$, $i\in [1,n]$
\item Permutation of factors $\ul{n}\to\ul{n}$, $(\e_1,\ldots,\e_n)\mapsto (\e_{\sigma(1)},\ldots, \e_{\sigma(n)})$,
             for $\sigma$ a permutation of $\{1,2,\ldots,n\}$, $n\ge 1$.
\item Involutions $\tau_{n,i}: \ul{n}\to\ul{n}$ switching $0$ and $1$ in the $i$-th spot, $n\ge 1$, $i\in [1,n]$.
\end{enumerate}
The product of sets induces on {\bf Cube} the structure of a symmetric monoidal category.
The category {\bf ECube} is by definition (see \cite[1.5]{Le09}) the smallest symmetric monoidal subcategory
of the category of sets which has the same objects as ${\bf Cube}$ and
 contains all morphisms in ${\bf Cube}$ and
\[\mu:\ul{2}\to\ul{1}, \quad (\e_1,\e_2)\mapsto \begin{cases} 1, & \text{if } (\e_1,\e_2)=(1,1),\\
                                                                                            0, &\text{else.}\end{cases}\]

We set $\square^1:=\P^1\setminus\{1\}:=\P^1_k\setminus\{1\}$ and 
$\square^n:=(\square^1)^{\times n}= (\P^1\setminus\{1\})^n$. 
In the following we fix coordinates $(\P^1\setminus\{\infty\})^n=\Spec k[y_1,\ldots, y_n]$.
In $\Sm^*\Cor$ we define
\[\ol{\square}^1:= (\P^1, \{1\}), \quad \ol{\square}^n:=(\ol{\square}^1)^{\times n}.\]
Denote by $\square(\eta_{n,i,\e}): \square^n\inj \square^{n+1}$ the inclusion
given by $y_i=\frac{\e}{\e-1}\in \{0,\infty\}$, for $\e\in \{0,1\}$. Denote
by $\square(p_{n,i}): \square^n\to \square^{n-1}$ the projection omitting the $i$-th factor and
by $\square(\sigma):\square^n\to \square^n$ the permutation of the factors given by 
$\square(\sigma)^*(y_i)=y_{\sigma^{-1}(i)}$.
Finally denote by $\square(\tau_{n,i}):\square^n\to \square^n$ the morphism which is the identity on all factors
except the $i$-th factor, where it is induced by the unique isomorphism of $\P^1$ which fixes
 $1$ and switches $0$ and $\infty$. It is straightforward to check that the graphs of these maps
define elements
\[\ol{\square}(\eta_{n,i,\e})\in \Cor(\ol{\square}^n, \ol{\square}^{n+1}),\quad
\ol{\square}(p_{n,i})\in \Cor(\ol{\square}^n, \ol{\square}^{n-1}),\]
\[\ol{\square}(\sigma)\in \Cor(\ol{\square}^n, \ol{\square}^n), \quad 
      \ol{\square}(\tau_{n,i})\in \Cor(\ol{\square}^n, \ol{\square}^n)\]
and that we obtain in this way a strict monoidal functor, in particular a  co-cubical object,
\[\ol{\square}:{\bf Cube}\to \Sm^*\Cor,\quad \ul{n}\mapsto \ol{\square}^n.\]
Finally using the $k$-isomorphism $\P^1\setminus\{1\}\xr{\simeq} \A^1$,
which sends $0$ to $0$ and $\infty$ to $1$ we define $\square(\mu)$ as the composition
\[\square(\mu):\square^2\cong \A^2\xr{\rm multiplication} \A^1\cong \square^1.\]
Let $\ol{\square}(\mu)\subset (\P^1)^2\times \P^1$ be the closure of the graph of $\square(\mu)$.
If we write $(\P^1\setminus\{\infty\})^2\times (\P^1\setminus\{\infty\})=\Spec k[y_1,y_2,z]$, then
\[\ol{\square}(\mu)\cap (\P^1\setminus\{\infty\})^2\times (\P^1\setminus\{\infty\})=
   \Spec k[y_1,y_2,z]/((z-1)y_1y_2-(y_1-1)(y_2-1)z).\]
Hence if we denote by $p_i: \ol{\square}(\mu)\to \P^1$, $i=1,2,3$ the three projections we obtain the
following inequality of Cartier divisors on the smooth scheme $\ol{\square}(\mu)$
\mlnl{p_1^*(\{1\}) +p_2^*(\{1\}) =\\
 (\{1\}\times \P^1\times \{1\})+ (\{1\}\times\{0\}\times\P^1) + (\P^1\times\{1\}\times \{1\}) + 
 (\{0\}\times\{1\}\times \P^1)\\
\ge  (\{1\}\times \P^1\times \{1\})+(\P^1\times\{1\}\times \{1\})= p_3^*(\{1\}).}
Thus 
\[\ol{\square}(\mu)\in \Cor(\ol{\square}^2, \ol{\square}^1).\]
It follows that the functor $\ol{\square}$ extends to a strict monoidal functor
\eq{ecubes1}{\ol{\square}:{\bf ECube}\to \Sm^*\Cor.}
\end{no}

\begin{no}\label{ecubesS}
Let $S$ be a smooth equidimensional $k$-scheme. We set
\[\ol{\square}^n_S:=\ol{\square}^n\times S:=\Big((\P^1)^n\times S, 
\Big(\sum_{i=1}^n (\P^1)^{i-1} \times \{1\} \times (\P^1)^{n-i }\Big)\times S\Big).\]
This is an object in $\Sm^*_S\Cor$, where the structure map $(\P^1)^n\times S\to S$ is given by the projection. 
If we take the cartesian product of the correspondences  
$\ol{\square}(\eta_{n,i,\e})$, $\ol{\square}(p_{n,i})$, $\ol{\square}(\sigma)$, $\ol{\square}(\tau_{n,i})$,
$\ol{\square}(\mu)$ in \ref{ecubes} with the diagonal $\Delta_S\subset S\times S$ 
we obtain correspondences
\[\ol{\square}_S(\eta_{n,i,\e})\in \Cor_S(\ol{\square}^n_S, \ol{\square}^{n+1}_S),\quad
\ol{\square}_S(p_{n,i})\in \Cor_S(\ol{\square}^n_S, \ol{\square}^{n-1}_S),\]
\[\ol{\square}_S(\sigma)\in \Cor_S(\ol{\square}^n_S, \ol{\square}^n_S), \quad 
      \ol{\square}_S(\tau_{n,i})\in \Cor_S(\ol{\square}^n_S, \ol{\square}^n_S),
\quad \ol{\square}_S(\mu)\in \Cor_S(\ol{\square}_S^2, \ol{\square}_S^1).\]
This clearly induces a functor
\[\ol{\square}_S: {\bf ECube }\to \Sm^*_S\Cor.\]
\end{no}

\section{Relative Suslin homology with modulus}\label{sec:rel-Suslin}

The following definition is a modulus version of relative Suslin homology defined in \cite[{\S 3}]{SuVo00}.
\begin{defn}\label{defn:rel-Suslin-homology}
Let $S$ be a smooth equidimensional $k$-scheme and $(X/S, D)\in \Sm^*_S\Cor$.
For $n\ge 0$ set
\[\tilde{C}_n(X/S,D):= \Cor_S(\ol{\square}^n_S, (X,D)).\]
By \ref{ecubesS} we obtain an extended cubical object in the sense of \cite[1.5]{Le09}
\[{\bf ECube}^{\rm opp}\to (\text{\bf abelian groups}), \quad \ul{n}\mapsto \tilde{C}_n(X/S,D).\]
We set 
\[C_n(X/S,D)^{\rm degn}:=\sum_{i=1}^n \im( \ol{\square}_S(p_{n,i})\circ: C_{n-1}(X/S,D)\to C_n(X/S,D)),\]
\[ C_n(X/S,D):= \tilde{C}_n(X/S,D)/C_n(X/S,D)^{\rm degn}\]
and
\[\partial_{n,i,\e}:= \ol{\square}_S(\eta_{n-1,i,\e})\circ \,:C_{n}(X/S,D)\to C_{n-1}(X/S,D).\]
We denote by $(C_\bullet(X/S,D), d)$ the complex with differential given by
\[d_n=\sum_{i=1}^{n} (-1)^i (\partial_{n,i,\infty}-\partial_{n,i,0}): C_n(X/S,D)\to C_{n-1}(X/S,D).\]
Finally for $i\in \Z$ we denote by
\[H_i^S(X/S,D):=H_i(C_\bullet(X/S,D),d),\]
the {\em $i$-th Suslin homology} of $(X/S,D)$.
If $S=\Spec k$ we simply write
\[H^S_i(X,D):=H_i^S(X/\Spec k,D).\]
\end{defn}

\begin{rmk}\label{rmk:Cn-expl}
Notice that by definition an integral correspondence
$Z\in C_n(X/S,D)$ can be identified with an integral closed subscheme 
$Z\subset (\P^1\setminus\{1\})^n\times (X\setminus|D|)$ such that
the map $Z\to (\P^1\setminus\{1\})^n\times S$ induced by $X\to S$ is finite and surjective and 
if $\ol{Z}\subset (\P^1)^n\times X$ denotes the closure of $Z$ then 
\eq{rmk:Cn-expl1}{D_{\ol{Z}}\prec \Big(\sum_{i=1}^n (\P^1)^{i-1}\times\{1\}\times (\P^1)^{n-i}\Big)_{\ol{Z}}.}
Also notice that because of this last condition $Z$ is actually closed in $(\P^1\setminus\{1\})^n\times X$.

Furthermore by definition (or convention) the group 
$C_0(X/S,D)$ is equal to the closed integral subschemes $Z\subset X$ which are finite and
surjective over a connected component of $S$ and are contained in $X\setminus|D|$.
\end{rmk}

\begin{lem}[Levine]\label{lem:NSH}
Let $S$ be a smooth $k$-scheme and $(X/S, D)\in \Sm^*_S\Cor$.
For $n\ge 0$ set
\[NC_n(X/S,D):= \bigcap_{i=2}^{n}\Ker(\partial_{n,i,0})\cap\bigcap_{i=1}^n\Ker (\partial_{n,i,\infty}).\]
Then the differential $d_n: C_n(X/S,D)\to C_{n-1}(X/S,D)$ restricts to
\[\partial_{n,1,0} : NC_n(X/S,D)\to NC_{n-1}(X/S,D)\]
and the natural inclusion of complexes
\[(NC_\bullet(X/S,D),\partial_{\bullet,1,0})\inj (C_\bullet(X/S,D),d)\]
is a homotopy equivalence. In particular
\[H_i^S(X/S,D)=H_i(NC_\bullet(X/S,D)).\]
\end{lem}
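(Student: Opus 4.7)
The plan is to deduce this from Levine's general normalization theorem for extended cubical objects in abelian categories (whence the attribution ``[Levine]'' in the statement), applied to the extended cubical object $\ul{n}\mapsto\tilde{C}_n(X/S,D)$ coming from the functor $\ol{\square}_S\colon {\bf ECube}\to\Sm^*_S\Cor$ of \ref{ecubesS}.

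Two preliminary compatibilities need to be verified. First, $d_n$ restricts to $\partial_{n,1,0}$ on $NC_n$: in the formula $d_n=\sum_{i=1}^n(-1)^i(\partial_{n,i,\infty}-\partial_{n,i,0})$, every $\partial_{n,i,\infty}$ vanishes on $NC_n$ by definition, as does $\partial_{n,i,0}$ for $i\ge 2$, leaving only $-(-1)^1\partial_{n,1,0}=\partial_{n,1,0}$. Second, $\partial_{n,1,0}$ sends $NC_n$ into $NC_{n-1}$: this is immediate from the cubical relations, which rewrite each composite $\partial_{n-1,j,\e}\circ\partial_{n,1,0}$ in the form $\partial_{n-1,1,0}\circ\partial_{n,j+1,\e}$, so that the vanishing conditions defining $NC_n$ propagate correctly to $NC_{n-1}$.

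The heart of the matter is producing a homotopy inverse to the inclusion $NC_\bullet\hookrightarrow C_\bullet$. Following \cite{Le09}, I would interpolate via
\[NC_n^{(r)}:=\bigcap_{i=2}^n\Ker(\partial_{n,i,0})\cap\bigcap_{i=1}^r\Ker(\partial_{n,i,\infty}),\qquad 0\le r\le n,\]
so that $NC_n^{(0)}=C_n(X/S,D)$ and $NC_n^{(n)}=NC_n(X/S,D)$, and construct inductively, for each $r$, a chain homotopy between the identity of $NC_\bullet^{(r)}$ and a projector landing in $NC_\bullet^{(r+1)}$, assembled from the correspondences $\ol{\square}_S(\mu)$, $\ol{\square}_S(\eta_{\bullet,\bullet,\infty})$, and $\ol{\square}_S(p_{\bullet,\bullet})$. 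The multiplication $\ol{\square}_S(\mu)$ is the crucial ingredient: it provides exactly the extra degree of freedom needed to peel off one more $\infty$-face kernel condition, and is the sole reason the cubical object was extended to ${\bf ECube}$ in \ref{ecubesS}. The main obstacle is therefore pure bookkeeping, namely the cubical identities between $\mu$, $\eta$, and $p$ needed for $h\,d+d\,h=\id-\pi$ to hold at each step; these identities are forced by the structure of ${\bf ECube}$ and hold universally in any abelian-valued extended cubical object, so I would simply invoke Levine's general result and keep the written proof limited to recording how $\ul{n}\mapsto\tilde{C}_n(X/S,D)$ fits into his framework.
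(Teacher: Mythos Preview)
Your proposal is correct and matches the paper's approach exactly: the paper's proof is the single sentence ``This follows from the fact that $\ul{n}\mapsto \tilde{C}_n(X/S,D)$ is an extended cubical object and \cite[Lem 1.6]{Le09},'' which is precisely what you arrive at after your (helpful but optional) unpacking of the compatibilities and of Levine's argument. One small slip in your sketch: with your interpolation $NC_n^{(0)}=\bigcap_{i=2}^n\Ker(\partial_{n,i,0})$ is not all of $C_n(X/S,D)$, so a separate (and similar) reduction from $C_\bullet$ to $NC_\bullet^{(0)}$ is needed first --- but since you ultimately defer to \cite[Lem 1.6]{Le09} this does not affect the written proof.
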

\begin{proof}
This follows from the fact that $\ul{n}\mapsto \tilde{C}_n(X/S,D)$ is an extended cubical object
and \cite[Lem 1.6]{Le09}.
\end{proof}

\begin{no}\label{propertiesRSH}
We give some further properties of the groups $H_i^S(X/S,D)$.
\begin{enumerate}
\item $\Sm_S^*\Cor\to D^{-}(\text{\bf abelian groups})$, $(X/S,D)\to C_\bullet(X/S,D)$ is a covariant functor.
        In particular, $\Sm_S^*\Cor\to (\text{\bf abelian groups})$, $(X/S,D)\to H^S_i(X/S,D)$ is a covariant functor for all
           $i$.
\item\label{propertiesRSH1} Let ${\rm CH}^r(X|D,n)$ be the higher Chow groups with modulus from \cite{BS14} 
       which are the $n$-th homology groups of the cycle complex $z^r(X|D,\bullet)$ defined in \cite[Def 2.5]{BS14}.
       If $d=\dim X$, then there is a natural map 
           $C_\bullet(X,D)\to z^d(X|D,\bullet)$ inducing maps 
                 \eq{propertiesRSH11}{H_i^S(X,D)\to {\rm CH}^d(X|D,i),}
                  cf. \cite[Conj 1.2]{SuVo00}.
\item\label{propertiesRSH2} 
    For $i=0$, the map $H_0^S(X,D)\xr{\eqref{propertiesRSH11}} {\rm CH}^d(X|D)={\rm CH}_0(X|D)$ is an isomorphism.
         Indeed, this follows directly from the definition and the fact that a 1-dimensional closed subscheme
            in $(X\setminus|D|)\times \square^1$, which intersects the faces of $\square^1$ properly, has either 
             trivial boundary or is finite and surjective over $\square^1$. (Here we use that $X$ is proper over $k$.)
\item For $(X/S,D)\in \Sm_S^*\Cor$   and $D'$ an effective Cartier divisor on $X$ with $|D'|=|D|$ and $D'\ge D$
        the identity on $X$ induces a map $(X/S, D')\to (X/S,D)$ in $\Sm_S^*\Cor$.  Now let $U$ be an $S$-scheme which
 is smooth and equidimensional over $k$. Using Nagata compactification we find a pair $(X/S, D)\in \Sm_S^*\Cor$
     with an $S$-isomorphism $U\cong X\setminus|D|$. By the comment above we obtain a projective
       system $(H^S_i(X/S, m\cdot D))_{m\in \N}$. We set 
               \[H_{c,i}^S(U/S):=\varprojlim_m H^S_i(X/S,m\cdot D).\]
        Notice that this definition is independent of the choice $(X/S,D)$. Indeed as usual 
         this follows from the two observations: 1. Any two choices of $(X/S,D)$ and $(X'/S, D')$ 
         can be dominated by a third object  $(X''/S, D'')\in \Sm_S^*\Cor$ with $X''\setminus|D''|\cong U$. 
          2. If $(X'/S, D')\to (X/S, D)$ is a morphism in $\Sm^*_S\Cor$ inducing an $S$-isomorphism on 
              $X'\setminus|D'|\xr{\simeq} X\setminus|D|$, then it also induces an isomorphism
                      \[\varprojlim_m H^S_i(X'/S,m\cdot D')\xr{\simeq} \varprojlim_m H^S_i(X/S,m\cdot D).\]
                 The latter fact follows directly from Remark \ref{rmk:alt-rel-corr} (the case of a birational map).
\item Assume $k$ is a finite field of characteristic $\neq 2$ and $U$ a smooth and integral $k$-scheme.
           Then using \eqref{propertiesRSH2} above 
         and \cite[Thm 3.3]{BS14} the Theorem of Kerz and Saito \cite[Thm III]{KeS} can be reformulated as an 
          isomorphism
                   \[H_{c,0}^S(U)\xr{\simeq } W^{\rm ab}(U),\]
             where $W^{\rm ab}(U)\subset \pi_1^{\rm ab}(U)$ is the abelianized Weil group of $U$.
\item\label{propertiesRSH6} 
Let $(X/S,D)\in \Sm_S^*\Cor$ and put $U=X \setminus |D|$.
The relative Suslin homology $H_i^S(U/S)$ 
is defined in \cite[{\S 3}]{SuVo96} as the homology group
of a simplicial complex $C_\bullet^S(U/S)$.
As usual it can be rewritten as a homology group
of a cubical complex.
Namely, by forgetting modulus
in the construction of ${C}_{\bullet}(X/S, D)$,
one gets a complex $C_{\bullet}(U/S)$ 
which is quasi-isomorphic to $C_\bullet^S(U/S)$.
Then $C_{\bullet}(X/S, D)$ is a subcomplex of $C_{\bullet}(U/S)$,
hence there is a canonical homomorphism
$H_i^S(X/S, D) \to H_i^S(U/S)$.
It is not clear if this is bijective when $D$ is reduced
(compare \cite[Remark 3.5]{KSY}).
\end{enumerate}
\end{no}

\section{The main theorem}

We state the main result of this note 
in Theorem \ref{thm:RSHcurves} below.

\begin{no}\label{not:OXD}
We recall some notations from \cite{SuVo96} and \cite{KSY}.
For $X$ an integral $k$-scheme and $D$ an effective Cartier divisor on $X$ we denote
\[\sO_{X|D}^\times:=\Ker(\sO_X^\times\to \sO_D^\times).\]
We set ${\rm Pic}(X, D):=H^1(X, \sO^\times_{X|D})$. Recall that this group can be identified with the group
of isomorphism classes of pairs $(\sL, \alpha)$, where $\sL$ is a line bundle on $X$ and $\alpha$ is an isomorphism
$\sL_{|D}\xr{\simeq}\sO_D$. We denote by $\widetilde{{\rm Pic}}(X, D)\subset {\rm Pic}(X, D)$ 
the subgroup of isomorphism classes of pairs $(\sL,\alpha)$, for which there exists an open subset $V\subset X$ with
$|D|\subset V$ and such that $\alpha$ is the restriction of an isomorphism $\sL_{|V}\cong\sO_{V}$.
If $D$ has an open affine neighborhood, then $\widetilde{{\rm Pic}}(X, D)={\rm Pic}(X, D)$, see \cite[2.]{SuVo96}.
Further, we set
\[G(X,D):=\bigcap_{x\in|D|} \Ker(\sO_{X,x}^\times\to \sO_{D,x}^\times).\]
We denote by $X^{(1)}$ the set points $x\in X$ whose closure $\ol{\{x\}}\subset X$ has codimension one. 
Notice that if $X$ is normal and we write $D=\sum_i n_i D_i$, where the $D_i$ are the irreducible components
of $D$, and if we denote by $v_x: k(X)^\times\to \Z$ the normalized discrete valuation defined by a
1-codimensional point $x\in X^{(1)}$, then
\mlnl{G(X,D)=\\
\{f\in k(X)^\times\,|\,  v_x(f)= 0, x\in X^{(1)}\text{ with } \ol{\{x\}}\cap D\neq\emptyset, 
   \text{ and }v_x(f-1)\ge n_i, \text{ if } \ol{\{x\}}=D_i\}.}
Indeed, since $X$ is normal the condition $v_x(f)= 0$, for all $x\in X^{(1)}$ with $\ol{\{x\}}\cap D\neq\emptyset$, implies that 
$f$ is regular on a neighborhood containing $D$
(e.g. the complement of the pole divisor of $f$). Further since $D$ is a Cartier divisor it has no embedded components; hence if a function is regular in a neighborhood of $D$ and its image in $\sO_D$ is equal to 1 in all generic points of $D$, 
then it is equal to 1 on all of $D$. Finally we denote by 
\[{\rm Div}(X,D)\]
the group of Cartier divisors on $X$ with support in $X\setminus|D|$.
By \cite[Lem 3.2]{KSY} there is an exact sequence
\eq{not:OXD1}{0\to H^0(X, \sO_{X|D}^\times)\to G(X,D)\xr{\Div} {\rm Div}(X,D)\to \widetilde{\rm Pic}(X,D)\to 0.}
\end{no}

\begin{no}\label{assumptions}
For the rest of  this note, we fix a smooth connected $k$-scheme $S$ 
and $(\sC/S, D)\in \Sm_S^*\Cor$ such that:
\begin{enumerate}
\item $\sC$ is integral and normal.
\item $\sC\to S$ is proper and the generic relative dimension is equal to 1, i.e. ${\rm trdeg}(k(\sC)/k(S))=1$.
\item $U=\sC\setminus D$ is smooth over $k$ and $U\to S$ is affine.
\item There is an open affine subscheme of $\sC$ which contains all the generic points of $D$.  
\end{enumerate}

\end{no}

\begin{lem}\label{lem:Div-C}
Under the above assumptions, we have  $C_0(\sC/S,D)= {\rm Div}(\sC,D)$.
\end{lem}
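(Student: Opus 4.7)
The plan is to rewrite both $C_0(\sC/S,D)$ and ${\rm Div}(\sC,D)$ as free abelian groups on the same set of integral closed subschemes of $\sC$, and then match the generators via a dimension count combined with the observation that a proper and affine morphism is finite.

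First, by Remark \ref{rmk:Cn-expl} (taking $n=0$) and the connectedness of $S$, the group $C_0(\sC/S,D)$ is freely generated by integral closed subschemes $Z\subset U:=\sC\setminus|D|$ that are finite and surjective over $S$. Such a $Z$ is automatically closed in $\sC$: the map $Z\hookrightarrow\sC$ factors as the closed immersion $Z\hookrightarrow U$ followed by the open immersion $U\hookrightarrow\sC$, hence is a locally closed immersion; moreover, finiteness of $Z\to S$ together with properness of $\sC\to S$ forces $Z\hookrightarrow\sC$ to be proper, and a proper locally closed immersion is closed. On the other hand, since $\sC$ is normal by \ref{assumptions}(1), the group ${\rm Div}(\sC,D)$ coincides with the group of Weil divisors on $\sC$ supported in $U$, i.e., the free abelian group on codimension-one integral closed subschemes of $\sC$ disjoint from $|D|$.

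The core step compares these two descriptions. Let $Z\subset\sC$ be an integral closed subscheme disjoint from $|D|$. Since $\sC$ and $S$ are integral of finite type over $k$ and $\sC\to S$ is dominant of transcendence degree one by \ref{assumptions}(2), the standard dimension formula yields $\dim\sC=\dim S+1$. If $Z\to S$ is finite and surjective then $\dim Z=\dim S=\dim\sC-1$, so $\codim(Z,\sC)=1$. Conversely, if $Z$ has codimension one in $\sC$ then $\dim Z=\dim S$, and the morphism $Z\to S$ is simultaneously \emph{affine}---factoring as the closed immersion $Z\hookrightarrow U$ composed with the affine map $U\to S$ of \ref{assumptions}(3)---and \emph{proper}---factoring as $Z\hookrightarrow\sC$ composed with the proper $\sC\to S$. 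Since affine plus proper gives finite, $Z\to S$ is finite; and since the image is closed of dimension $\dim S$ in the irreducible $S$, it coincides with $S$.

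The key (and really only) delicate point is recognising that $Z\to S$ is both affine and proper, hence finite, when $Z$ has codimension one in $\sC$. The hypothesis in \ref{assumptions}(3) that $U\to S$ is affine is essential here; hypothesis (4) of \ref{assumptions} plays no role in this lemma but will of course be needed elsewhere.
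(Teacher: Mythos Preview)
Your proof is correct and follows essentially the same approach as the paper's: both identify the generators of each side, use a dimension count to show finite-surjective $Z\subset U$ has codimension one, and conversely use that affine plus proper implies finite (together with a dimension argument for surjectivity) to show a codimension-one $Z\subset U$ lies in $C_0(\sC/S,D)$. Your write-up is somewhat more explicit about why $Z$ is closed in $\sC$ and why Weil divisors supported in $U$ are Cartier on $\sC$, but the underlying argument is the same.
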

\begin{proof}
First recall that $C_0(\sC/S,D)$ is the free abelian group generated by integral closed subschemes $Z\subset U$
which are finite and surjective over $S$. Since $U$ is smooth and $U\to S$ has generic relative dimension 1, 
$Z$ has to be a Cartier divisor on $U$ which is actually closed in $\sC$. Therefore 
$C_0(\sC/S,D)$ is a subgroup of ${\rm Div}(\sC,D)$.
On the other hand if $Z\subset \sC$ is a prime Cartier divisor whose support is contained
in $U$, then by the assumptions in \ref{assumptions} it has to be affine and proper over $S$ and hence finite.
Therefore its image is a reduced closed subscheme $Z_0\subset S$. 
If $Z_0$ is strictly contained in $S$, then  the dimension formula yields ${\rm trdeg}(k(Z)/k(Z_0))\ge 1$.
But since $Z$ is finite over $S$ this is impossible. Therefore $Z$ is surjective over $S$ and hence an element in $C_0(\sC/S,D)$.
\end{proof}

The following theorem is a modulus version of \cite[Thm. 3.1]{SuVo96}
(see also \cite{Li}).

\begin{thm}\label{thm:RSHcurves}
Let $(\sC\to S,D)$ be as in \ref{assumptions}. Then 
\[C_\bullet(\sC/S,D)\cong [G(\sC,D)\xr{\Div} {\rm Div}(\sC,D)]\quad \text{in } D^{-}(\text{{\bf abelian groups}}),\]
where the complex on the right sits in homological degree 1 and 0.
In particular, by \eqref{not:OXD1} we have
\[H_i^S(\sC/S,D)=\begin{cases} \widetilde{{\rm Pic}}(\sC, D), &\text{if } i=0,\\
                                             H^0(\sC, \sO^\times_{\sC|D}), & \text{if }i=1,\\
                                               0, &\text{if } i\ge 2.
                                             \end{cases}\]
\end{thm}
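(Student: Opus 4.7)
The plan is to construct an explicit chain map
\[
\phi_\bullet : [G(\sC, D) \xr{\Div} \Div(\sC, D)] \lra C_\bullet(\sC/S, D)
\]
with the source placed in homological degrees $1$ and $0$, and to prove it is a quasi-isomorphism; the displayed formula for $H_i^S$ then follows immediately from the exact sequence \eqref{not:OXD1}. In degree $0$, Lemma \ref{lem:Div-C} lets me take $\phi_0=\id$. In degree $1$, for $f\in G(\sC,D)\subset k(\sC)^\times$ (viewed as a dominant rational map $f:\sC\dashrightarrow\P^1$, with the unit element sent to $0$) I define $\phi_1(f)=Z_f$ to be the closure in $\P^1\times\sC$ of the transpose of the graph of $f$. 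Since $\sC$ is normal and $Z_f\to\sC$ is proper and birational, Zariski's main theorem identifies the normalization $\widetilde{Z_f}$ with $\sC$; hence $Z_f\to\P^1\times S$ has finite generic fibres, the only positive-dimensional fibres lie over $\{1\}\times S$, and these are excised upon restriction to $\square^1\times U$, so the restricted map is finite and surjective onto $\square^1\times S$. Writing $D=\sum n_iD_i$, the modulus inequality $D_{Z_f}\prec(\{1\}\times\sC)_{Z_f}$ translates under $\widetilde{Z_f}=\sC$ to $D\le f^*\{1\}$, i.e. $v_{D_i}(f-1)\ge n_i$, which is precisely the defining condition of $G(\sC,D)$; and $dZ_f=\partial_{1,1,0}(Z_f)-\partial_{1,1,\infty}(Z_f)$ equals the zero divisor minus the pole divisor of $f$, namely $\Div(f)$.

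By Lemma \ref{lem:NSH} it suffices to show that $\phi_\bullet$ becomes a quasi-isomorphism after replacing $C_\bullet$ by $NC_\bullet$, which I would break into two steps. Step (B): every prime cycle $Z\in NC_1(\sC/S,D)$ satisfies $\partial_{1,1,0}(Z)=\Div(f_Z)$ for a canonical $f_Z\in G(\sC,D)$. The vanishing $\partial_{1,1,\infty}(Z)=0$ forces $\ol Z\subset\A^1_{y_1}\times\sC$, so $y_1$ is a rational unit on the normalization $\widetilde{\ol Z}$. A dimension argument (using $\dim\ol Z=\dim\sC$ and the same exclusion at $\{\infty\}\times\sC$ to rule out $\ol Z=\P^1\times W$) shows that $\ol Z\to\sC$ is dominant, so $g:\widetilde{\ol Z}\to\sC$ is finite and surjective; I set $f_Z:=\Nm_{k(\widetilde{\ol Z})/k(\sC)}(y_1)$. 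The identity $g_*\Div(y_1)=\Div(\Nm(y_1))$ yields $\Div(f_Z)=g_*h^*\{0\}=\partial_{1,1,0}(Z)$ in $\Div(\sC,D)=NC_0$, while the modulus condition on $Z$ pushed down via \eqref{prop:refined-ineq5} of Proposition \ref{prop:refined-ineq} (combined with a local analysis of $\Nm(y_1)-1$ at the $D_i$) gives $f_Z\in G(\sC,D)$. Step (A): $NC_n(\sC/S,D)=0$ for every $n\ge 2$.

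The main obstacle is Step (A). For $n\ge 2$, an integral cycle $Z\in NC_n(\sC/S,D)$ is a codimension-one closed subscheme of $\square^n\times U$ whose closure $\ol Z$ in $(\P^1)^n\times\sC$ is (set-theoretically) disjoint from every face $\{y_i=\e\}\times\sC$ with $(i,\e)\ne(1,0)$, since otherwise a proper intersection would contribute a nonzero face restriction; yet $\ol Z$ satisfies the modulus inequality $D_{\ol Z}\prec\sum_{i=1}^n(\{y_i=1\})_{\ol Z}$ on its normalization. I expect to prove that $\ol Z$ is then necessarily pulled back from a lower cube via some projection $\square(p_{n,i})$, hence zero modulo degenerate cycles in $C_n$: the argument should proceed by projecting $\ol Z$ successively onto each $\G_m$-factor indexed by $i\ge 2$ (on which $y_i$ is a unit), exploiting the finite-surjectivity over $\square^n\times S$ and the modulus condition pushed through the normalization via \eqref{prop:refined-ineq4}--\eqref{prop:refined-ineq5} of Proposition \ref{prop:refined-ineq}, in the spirit of the vanishing arguments for additive higher Chow complexes of curves in \cite{BS14}. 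Combining Steps (A) and (B) with Lemma \ref{lem:NSH} and the exact sequence \eqref{not:OXD1} then yields the three asserted values of $H_i^S(\sC/S,D)$.
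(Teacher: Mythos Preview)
Your Step (A) is wrong as stated: the groups $NC_n(\sC/S,D)$ are \emph{not} zero for $n\ge 2$. What is true---and what must be proved---is that the \emph{homology} $H_n(NC_\bullet)$ vanishes for $n\ge 2$; the groups themselves are nonzero. Indeed, the paper's own computation shows that $\im(\delta_2:NC_2\to NC_1)$ is the set of $f/g$ with $f(1)/f(\infty)=g(1)/g(\infty)$, a nontrivial subgroup, so $NC_2\neq 0$. Your geometric argument also rests on a misreading of $NC_n$: an element of $NC_n$ is a class in the quotient $\tilde C_n/C_n^{\rm degn}$ lying in the intersection of the face kernels, i.e.\ a $\Z$-linear combination of prime correspondences (modulo degenerates) whose boundary contributions \emph{cancel}. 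It is not a single integral cycle, so ``$\ol Z$ is disjoint from every face except $\{y_1=0\}$'' is not meaningful for a general element, and the hoped-for conclusion that such a class is pulled back from a lower cube is simply false. The same issue infects Step (B): an element of $NC_1$ need not be represented by a prime $Z$ with $\partial_{1,1,\infty}(Z)=0$, so your norm construction $f_Z$ is not a priori defined on $NC_1$, and even if extended multiplicatively you have not shown it gives an isomorphism on $H_1$.

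The paper's approach is quite different and essentially algebraic rather than geometric. It semilocalizes $\sO_\sC$ at the generic points of $D$ to a Dedekind ring $A$ with a local equation $s$ for $D$, and identifies $\tilde C_n(\sC/S,D)^{\rm eff}$ with $C_0^{\rm eff}\oplus(Q_n/A^\times)$ for an explicit monoid $Q_n\subset A[t_1,\dots,t_n]$ encoding the modulus condition (Prop.~\ref{prop:descr-corr}); this yields an isomorphism $NC_\bullet\cong NQ_\bullet$ with a concrete polynomial complex (Cor.~\ref{cor:NQn-corr}). The vanishing of $H_n$ for $n\ge 2$ is then proved by writing down, for each closed $f/g\in NQ_n$, an explicit $h\in Q_{n+1}$ such that $f(t_2,\dots,t_{n+1})/h\in NQ_{n+1}$ bounds it; the degree-$1$ computation proceeds via the criterion $f(1)/f(\infty)=g(1)/g(\infty)$ for lying in $\im(\delta_2)$, from which the map $G(\sC,D)\to NQ_1/\im(\delta_2)$, $a\mapsto\rho((1-t_1)-a)$, is shown to be an isomorphism. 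Your $\phi_1$ and the norm map in Step (B) are morally the degree-$1$ shadow of this, but to salvage the higher degrees you would have to \emph{construct} bounding cycles, not argue that none exist.
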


\begin{rmk}\label{rmk:simplecase}
If $D$ is contained in an affine open neighborhood, then $H^0(X,\sO_{X|D}^\times)$ vanishes 
(see \cite[Proof of Thm 3.1]{SuVo96}) and $\widetilde{\rm Pic}(\sC,D)={\rm Pic}(\sC,D)$.
This shows Theorem \ref{thm-intro}.
\end{rmk}

The proof of Theorem \ref{thm:RSHcurves} will occupy the rest of this note.
We start with some preliminary observations.

\section{An auxiliary construction}
\begin{no}
We fix the following notation for this section:
Let $A$ be a semilocal Dedekind domain 
(thus a PID) 
with field of fractions $K={\rm Frac}(A)$.
We denote  by $\fm_1,\ldots, \fm_r$ the maximal ideals of $A$ and by
$v_1,\ldots, v_r: K^\times\to \Z$ the corresponding normalized discrete valuations.
 We fix an element $s\in \fm_1\cdots\fm_r$ and a set of normalized discrete valuations $\sV$
on $K$, which does not contain $v_1,\ldots, v_r$. 

Given a discrete valuation $v$ on $K$ there exists a {\em canonical extension} of $v$ to the field
of rational functions $K(t_1,\ldots, t_n)$,
which we denote again by $v$, such that for all
$0\neq f=\sum a_{i_1,\ldots, i_n} t^{i_1}_1\cdots t_n^{i_n}\in K[t_1,\ldots, t_n]$ we have
\eq{aux1}{v(f)= \min\{v(a_{i_1,\ldots, i_n}),\text{ all }i_1,\ldots, i_n\},}
see e.g. \cite[VI, \S 10.1, Prop 2]{BourbakiCA}.
\end{no}

\begin{defn}\label{defn:Qn}
For $n\ge 1$ we define $Q_n(A,\sV, s):=Q_n$ to be the set of all polynomials
\eq{defn:Qn0}{f=\sum_{i_1=0}^{N_1}\cdots\sum_{i_n=0}^{N_n} a_{i_1,\ldots, i_n}\cdot t_1^{i_1}\cdots t_n^{i_n}\in
     A[t_1,\ldots, t_n]}
such that
\begin{enumerate}
\item\label{defn:Qn1} $a_{i_1,\ldots, i_n}\in s^{\max_{j=1}^n(N_j-i_j)}\cdot A$, all $i_1,\ldots,i_n$.
\item\label{defn:Qn2} $a_{N_1,\ldots, N_n}\in A^\times$.
\item\label{defn:Qn2.5} $v(f/a_{N_1,\ldots, N_n})=0$, for all $v\in \sV$.
\item\label{defn:Qn3} For all $j\in [1,n]$ we have     $f(t_1,\ldots, t_{j-1},0,t_{j+1},\ldots, t_n)\neq 0$. 
\end{enumerate}
Notice that $A^\times\subset Q_n$. 
\end{defn}

\begin{defn}\label{defn:LC}
Let $f\in A[t_1,\ldots, t_n]$ be a polynomial as in \eqref{defn:Qn0}. Then we say that
$f$ has a leading coefficient if $a_{N_1,\ldots, N_n}\neq 0$, in which case we call $a_{N_1,\ldots, N_n}$ the leading
coefficient of $f$.
\end{defn}

\begin{defn}\label{defn:rho-f}
Let $f\in A[t_1,\ldots, t_n]$ be a polynomial and assume it satisfies the conditions \eqref{defn:Qn1},\eqref{defn:Qn2}
and \eqref{defn:Qn2.5} of Definition \ref{defn:Qn}. Then we denote by $\rho(f)$ the unique polynomial
which satisfies
\begin{enumerate}
\item $\rho(f)\in Q_n$.
\item $f=t_1^{m_1}\cdots t_n^{m_n}\cdot \rho(f)$, for non-negative integers $m_1,\ldots, m_n$.
\end{enumerate} 
\end{defn}

\begin{lem}\label{lem:Qn-via-Qn-1}
Assume $n\ge 2$. For $j\in [1,n]$ denote by $A^{(j)}$ the localization of $A[t_j]$ with respect to
$\cup_{i=1}^r (\fm_i\cdot A[t_j])$. If $v$ is a valuation on $K$ denote by $v^{(j)}$ its canonical extension to $K(t_j)$ and
set $\sV^{(j)}:=\{v^{(j)}\,|\, v\in \sV\}$. Denote by
\[\iota_j: A[t_1,\ldots, t_n]\inj A^{(j)}[t_1,\ldots, \widehat{t_j},\ldots, t_n]\]
the natural inclusion, where the hat $\widehat{(\phantom{-})}$ means omission. In particular,
$A^{(j)}$ is a semilocal Dedekind domain with maximal ideals $\fm_i^{(j)}=\fm_i A^{(j)}$, 
and $s\in \fm_1^{(j)}\cdots \fm_r^{(j)}$.
We denote by $\eqref{defn:Qn1}^{(j)}$, $\eqref{defn:Qn2}^{(j)}$ the properties
\eqref{defn:Qn1}, \eqref{defn:Qn2} from Definition \ref{defn:Qn} with
$A,\sV$ replaced by $A^{(j)},\sV^{(j)}$.
Then, for $f \in A[t_1,\ldots, t_n]$ we have
\[f \text{ satisfies } \eqref{defn:Qn1} \text{ and } \eqref{defn:Qn2}\Longleftrightarrow 
  \iota_j(f) \text{ satisfies } \eqref{defn:Qn1}^{(j)} \text{ and }\eqref{defn:Qn2}^{(j)}, \quad\text{for all }j\in[1,n].\]
\end{lem}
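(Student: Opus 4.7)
The plan is to expand $f$ as a polynomial in the variables $t_k$ with $k\ne j$ and coefficients in $A[t_j]$, i.e.\ to write
\[ \iota_j(f)=\sum_{(i_1,\ldots,\widehat{i_j},\ldots,i_n)} b^{(j)}_{i_1,\ldots,\widehat{i_j},\ldots,i_n}\cdot\!\!\prod_{k\ne j} t_k^{i_k},\qquad b^{(j)}_{i_1,\ldots,\widehat{i_j},\ldots,i_n}=\sum_{i_j=0}^{N_j} a_{i_1,\ldots,i_n}\, t_j^{i_j}, \]
and to translate conditions \eqref{defn:Qn1} and \eqref{defn:Qn2} on the $a$'s into the analogous conditions on the $b^{(j)}$'s. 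The main technical input I will need is the identity
\[ s^M A^{(j)}\cap A[t_j]=s^M A[t_j]\qquad(M\ge 0). \]
This I would prove by noting that a semilocal Dedekind domain is a PID, that in the UFD $A[t_j]$ each generator $\pi_i$ of $\fm_i$ remains prime (since $A[t_j]/\pi_i A[t_j]=(A/\fm_i)[t_j]$ is a domain), and that therefore any $b\in A[t_j]\setminus\bigcup_i\fm_i A[t_j]$ is coprime to $s=u\,\pi_1^{e_1}\cdots\pi_r^{e_r}$ (with $e_i\ge 1$); an equation $xb=s^M a$ in $A[t_j]$ then forces $s^M\mid x$.

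For $\Longrightarrow$, the bounds \eqref{defn:Qn1} immediately give each $b^{(j)}$ a common $s^{\max_{k\ne j}(N_k-i_k)}$-factor in $A[t_j]\subset A^{(j)}$, yielding $\eqref{defn:Qn1}^{(j)}$. For $\eqref{defn:Qn2}^{(j)}$ I would examine the top coefficient $b^{(j)}_{N_1,\ldots,\widehat{N_j},\ldots,N_n}$: its $t_j^{N_j}$-coefficient is $a_{N_1,\ldots,N_n}\in A^\times$ by \eqref{defn:Qn2}, while every lower $t_j^{i_j}$-coefficient lies in $s^{N_j-i_j}A\subseteq \fm_i$ for every $i$ by \eqref{defn:Qn1}; reducing modulo $\fm_i$ leaves the nonzero monomial $\overline{a_{N_1,\ldots,N_n}}\,t_j^{N_j}$, so $b^{(j)}_{N_1,\ldots,\widehat{N_j},\ldots,N_n}\notin\fm_i A[t_j]$ for any $i$ and is thus a unit in $A^{(j)}$.

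For $\Longleftarrow$, I first recover \eqref{defn:Qn1}, then bootstrap to \eqref{defn:Qn2}. Given a multi-index, let $M=\max_k(N_k-i_k)$; since $n\ge 2$ I may pick $j$ so that $\max_{k\ne j}(N_k-i_k)=M$ (any $j$ distinct from some index achieving the max works). Then $\eqref{defn:Qn1}^{(j)}$ together with the key intersection identity gives $b^{(j)}_{i_1,\ldots,\widehat{i_j},\ldots,i_n}\in s^M A[t_j]$, and reading off the $t_j^{i_j}$-coefficient yields $a_{i_1,\ldots,i_n}\in s^M A$. With \eqref{defn:Qn1} now at my disposal, I apply $\eqref{defn:Qn2}^{(1)}$: the polynomial $b^{(1)}_{N_2,\ldots,N_n}=\sum_{i_1} a_{i_1,N_2,\ldots,N_n}t_1^{i_1}$ is a unit in $A^{(1)}$, so it lies in no $\fm_i A[t_1]$; but its non-leading coefficients are in $sA\subseteq\fm_i$ by the already-established \eqref{defn:Qn1}, so its reduction modulo $\fm_i$ is $\overline{a_{N_1,\ldots,N_n}}\,t_1^{N_1}$, forcing $a_{N_1,\ldots,N_n}\notin\fm_i$ for every $i$ and hence $a_{N_1,\ldots,N_n}\in A^\times$.

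The step I expect to require the most care is the intersection identity $s^M A^{(j)}\cap A[t_j]=s^M A[t_j]$: this is the precise reason the localization $A^{(j)}$ is set up at $\bigcup_i\fm_i A[t_j]$ and not at a smaller multiplicative set, and it is what promotes a divisibility in the fraction ring $A^{(j)}$ back to a divisibility already in $A[t_j]$. Once it is in hand, the rest is bookkeeping with the maxima of the $N_k-i_k$ and with the filtration by $\fm_i$.
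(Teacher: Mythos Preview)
Your proof is correct and follows essentially the same approach as the paper: both expand $f$ with coefficients $b^{(j)}_{i_1,\ldots,\widehat{i_j},\ldots,i_n}=\sum_{i_j} a_{i_1,\ldots,i_n}t_j^{i_j}$, and both use the trick of choosing $j$ away from an index realizing $\max_k(N_k-i_k)$ to recover \eqref{defn:Qn1} from the $\eqref{defn:Qn1}^{(j)}$'s. The only cosmetic difference is that where you invoke the UFD-based identity $s^M A^{(j)}\cap A[t_j]=s^M A[t_j]$, the paper uses the equivalent valuation-theoretic formula $v_e^{(j)}\bigl(\sum_{i_j} c_{i_j}t_j^{i_j}\bigr)=\min_{i_j} v_e(c_{i_j})$ for the discrete valuations $v_e^{(j)}$ of $A^{(j)}$ (this is exactly the canonical extension formula~\eqref{aux1}); these are two phrasings of the same fact.
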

\begin{proof}
For $j\in [1,n]$ write
\[f=\sum_{i_1=0}^{N_1}\cdots\widehat{\sum_{i_j=0}^{N_j}}\cdots \sum_{i_n=0}^{N_n} 
a_{i_1,\ldots, \widehat{i_j},\ldots, i_n}^{(j)} t_{1}^{i_1}\cdots \widehat{t_j^{i_j}}\cdots t_n^{i_n},\]
where
\[a_{i_1,\ldots, \widehat{i_j},\ldots, i_n}^{(j)}= \sum_{i_j=0}^{N_j} a_{i_1,\ldots, i_n} t_j^{i_j}\in 
    A[t_j]\subset A^{(j)}.\]
 We write 
$\eqref{defn:Qn1}^{(j)}(\iota_j(f))$ for '$\iota_j(f)$ satisfies $\eqref{defn:Qn1}^{(j)}$' etc.
Then,
\begin{align*}
&\eqref{defn:Qn1}^{(j)}(\iota_j(f))
\\
\Longleftrightarrow &  v_e^{(j)}(a_{i_1,\ldots, \widehat{i_j},\ldots, i_n}^{(j)})\ge 
       ( \max_{c\in [1,n]\atop c\neq j}(N_c-i_c ))\cdot v^{(j)}_e(s),\quad  
     \text{all }e\in[1,r], \,i_1,\ldots,\widehat{i_j},\ldots i_n
\\
\Longleftrightarrow & \min_{i_j=0}^{N_j}(v_e(a_{i_1,\ldots, i_n})) \ge 
         ( \max_{c\in [1,n]\atop c\neq j}(N_c-i_c ))\cdot v_e(s),\quad 
                  \text{all }e\in[1,r],\, i_1,\ldots,\widehat{i_j},\ldots i_n.
\end{align*}
Hence
\[\eqref{defn:Qn1}^{(j)}(\iota_j(f)), \quad \text{for all }j\in[1,n] \Longleftrightarrow \eqref{defn:Qn1}(f).\]
Furthermore,
\begin{align*}
\eqref{defn:Qn2}^{(j)}(\iota_j(f)) &\Longleftrightarrow 
                v_e^{(j)}(a^{(j)}_{N_1,\ldots,\widehat{N_j},\ldots,N_n})=0,\quad \text{all }e\in [1,r]\\
   &\Longleftrightarrow \min_{i_j=0}^{N_j}(v_e(a_{N_1,\ldots,i_j,\ldots, N_n}))=0,\quad \text{all }e\in [1,r]\\
 &\stackrel{\eqref{defn:Qn1}}{\Longleftrightarrow} v_e(a_{N_1,\ldots, N_n})=0,\quad \text{all }e\in [1,r]\\
  &\Longleftrightarrow \eqref{defn:Qn2}(f).
\end{align*}
Hence the lemma.
\end{proof}

\begin{lem}\label{lem:eltsQn}
Let $f\in A[t_1,\ldots, t_n]$ be a polynomial.
Then $f$ satisfies properties \eqref{defn:Qn1} and \eqref{defn:Qn2} of Definition \ref{defn:Qn} if and only if
$f$ satisfies the following two properties:
\begin{enumerate}
\item\label{lem:eltsQn1} There exist  $N\in \Z_{\ge 0}$ and  $g_1,\ldots,g_n,h\in A[t_1,\ldots, t_n]$ such that
         \[f\cdot h= (t_1\cdots t_n)^N+s\cdot (t_1\cdots t_n)^{N-1}\cdot g_1+\cdots + s^N \cdot g_N.\]
\item\label{lem:eltsQn2} $f(t_1,\ldots, t_{j-1},\infty,t_{j+1},\ldots, t_n)\not\in \fm_iA[t_1,\ldots, \widehat{t_j}, \ldots, t_n]$, 
          for all $j\in [1,n]$ and     $i\in [1,r]$.
         (Here $f(t_1,\ldots, t_{j-1},\infty,t_{j+1},\ldots, t_n)$ denotes the leading term of $f$ viewed as a polynomial in $t_j$.)
\end{enumerate}
\end{lem}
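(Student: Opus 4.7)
$(\Rightarrow)$ The plan for the easier direction is an explicit construction. Assuming $f$ satisfies \eqref{defn:Qn1} and \eqref{defn:Qn2}, I would establish \eqref{lem:eltsQn2} by noting that the leading coefficient of $f$ in $t_j$ contains the monomial $a_{N_1,\ldots,N_n}\cdot\prod_{c\neq j}t_c^{N_c}$, whose coefficient is a unit by \eqref{defn:Qn2} and hence outside every $\fm_i$. For \eqref{lem:eltsQn1}, I would set $N:=\max_j N_j$ and $h:=a_{N_1,\ldots,N_n}^{-1}\prod_c t_c^{N-N_c}$, then verify monomial by monomial: the term $a_\alpha t^\alpha$ of $f$ contributes to $fh$ a monomial whose coefficient lies in $s^{\delta(\alpha)}A$ by \eqref{defn:Qn1} and whose exponent vector has minimum entry $N-\delta(\alpha)$, where $\delta(\alpha):=\max_c(N_c-\alpha_c)\ge 0$. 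Grouping by $\delta(\alpha)$ assembles the required decomposition $fh=\sum_{k=0}^N s^k(t_1\cdots t_n)^{N-k}g_k$ with $g_0=1$.

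$(\Leftarrow)$ Now assume $fh=F:=(t_1\cdots t_n)^N+\sum_{k\ge 1}s^k(t_1\cdots t_n)^{N-k}g_k$ and hypothesis \eqref{lem:eltsQn2}. Fix a maximal ideal $\fm_i$ with residue valuation $v_i$, and let $e_i:=v_i(s)\ge 1$. Since $s\in\fm_i$, reducing modulo $\fm_i$ gives $\ol f\cdot\ol h=(t_1\cdots t_n)^N$ in the UFD $(A/\fm_i)[t_1,\ldots,t_n]$, which forces $\ol f=u_i\prod_c t_c^{m_c^{(i)}}$ and $\ol h=u_i^{-1}\prod_c t_c^{N-m_c^{(i)}}$ for some $u_i\in(A/\fm_i)^\times$ and integers $m_c^{(i)}\in[0,N]$. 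Hypothesis \eqref{lem:eltsQn2} forces $\deg_{t_c}(\ol f)=N_c$, whence $m_c^{(i)}=N_c$ for every $c$ and $i$. This proves \eqref{defn:Qn2} (since $a_{N_1,\ldots,N_n}\notin\fm_i$ for all $i$) and simultaneously produces the leading coefficient $b_{N-N_1,\ldots,N-N_n}\in A^\times$ of $h$.

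To obtain the sharp bound in \eqref{defn:Qn1}, I would introduce, for each $c\in[1,n]$, the weighted Gauss valuation $w_{c,i}$ on $K(t_1,\ldots,t_n)$ defined by $w_{c,i}\bigl(\sum a_\alpha t^\alpha\bigr):=\min_\alpha\bigl(v_i(a_\alpha)+\alpha_c e_i\bigr)$, i.e.\ the extension of $v_i$ with weights $w_{c,i}(t_c)=e_i$ and $w_{c,i}(t_{c'})=0$ for $c'\neq c$ (the standard theorem that such weighted Gauss extensions are valuations on the rational function field is what makes this work). A short computation shows $w_{c,i}(F)=Ne_i$: every summand of $F$ has $w_{c,i}$-value $\ge Ne_i$, and the distinguished monomial $t_1^N\cdots t_n^N$, whose coefficient in $F$ is a unit, realizes equality. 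The unit coefficient of $h$ obtained above gives $w_{c,i}(h)\le (N-N_c)e_i$, so multiplicativity of $w_{c,i}$ yields $w_{c,i}(f)=w_{c,i}(F)-w_{c,i}(h)\ge N_ce_i$, which unpacks to $v_i(a_\alpha)\ge(N_c-\alpha_c)e_i$ for every $\alpha$. Taking the maximum over $c$, and running over all maximal ideals $\fm_i$, yields \eqref{defn:Qn1}.

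The main obstacle is precisely this final upgrade: reducing $fh=F$ modulo $\fm_i$ controls only the leading coefficient of $f$, and working coefficient-wise with the canonical valuation $v_i$ (which has $v_i(t_c)=0$) does not isolate the coordinate-wise dependence demanded by the sharp $\max_c$ in \eqref{defn:Qn1}. The family of weighted Gauss valuations $\{w_{c,i}\}_c$ is the key device: each $w_{c,i}$ probes the contribution of the $c$-th coordinate in isolation, and their combined bounds reassemble into the desired $\max_c(N_c-\alpha_c)e_i$.
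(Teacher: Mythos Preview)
Your argument is correct. The forward direction $(\Rightarrow)$ matches the paper's proof exactly: both set $N=\max_j N_j$, multiply by $a_{N_1,\ldots,N_n}^{-1}\prod_c t_c^{N-N_c}$, and regroup by the defect $\max_c(N_c-\alpha_c)$.

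The backward direction $(\Leftarrow)$ is genuinely different. The paper proceeds by induction on $n$: for $n=1$ it passes to a finite extension in which $f$ splits, observes that every root of $f$ is a root of the distinguished polynomial $g(t)=t^N+st^{N-1}g_1+\cdots+s^Ng_N$ and hence divisible by $s$ in the integral closure, and concludes $f=u\prod_i(t-sb_i)$; for $n\ge 2$ it invokes Lemma~\ref{lem:Qn-via-Qn-1} to reduce to $n-1$ variables over the semilocal Dedekind domain $A^{(j)}$. Your route bypasses both the root-by-root analysis and the inductive reduction lemma. After extracting the unit coefficient $b_{N-N_1,\ldots,N-N_n}$ of $h$ from the mod-$\fm_i$ factorisation (a small point: you call it the ``leading coefficient'', but you only need---and only prove---that this particular coefficient is a unit), the weighted Gauss valuations $w_{c,i}$ give the sharp bound $v_i(a_\alpha)\ge (N_c-\alpha_c)e_i$ in one stroke; taking the maximum over $c$ and intersecting over $i$ recovers \eqref{defn:Qn1}. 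This is cleaner and uniform in $n$, at the cost of invoking the (standard) fact that such weighted extensions are valuations. The paper's approach is more hands-on and makes Lemma~\ref{lem:Qn-via-Qn-1} do double duty, but it requires that separate reduction step.
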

\begin{proof}
Assume $f$ satisfies the conditions \eqref{lem:eltsQn1} and \eqref{lem:eltsQn2} from the statement.
We have to show that $f$ has the properties \eqref{defn:Qn1} and \eqref{defn:Qn2} of Definition \ref{defn:Qn}.
We prove this by induction over $n$.
First assume $n=1$ and write $t=t_1$. 
In this case clearly \eqref{lem:eltsQn2} of the statement implies
\eqref{defn:Qn2} of Definition \ref{defn:Qn}.
Set $g(t):= t^N+s\cdot t^{N-1}\cdot g_1(t)+\cdots + s^N \cdot g_N(t)\in A[t]$.
Let $b$ be a root of $f$ in some finite extension $L$ of $K$. By $\eqref{lem:eltsQn1}$ we have 
$g(b)=0$ and hence the element $b$ lies in the integral closure $B$ of $A$ in $L$. 
By the Krull-Akizuki theorem $B$ is still a semi-local Dedekind domain. Using that $B$ is factorial and the special shape of $g$
it is direct to check that $s|b$. Taking $L$ large enough we can assume that all roots of $f$ lie in $B$. Hence
$f=u\prod_i(t-sb_i)$, with $u\in A^\times$ and $b_i\in B$. This implies $f$ satisfies \eqref{defn:Qn1} of Definition \ref{defn:Qn}. 
Now if $n\ge 2$ we observe that for all $j\in [1,n]$ the polynomials
$\iota_j(f)\in A^{(j)}[t_1,\ldots, \widehat{t_j},\ldots, t_n]$ satisfy the analogs of \eqref{lem:eltsQn1},\eqref{lem:eltsQn2}
of the statement, where we use the notation from Lemma \ref{lem:Qn-via-Qn-1}.
(Notice that $t_j^N$ is a unit in $A^{(j)}$.)
Hence by induction $\iota_j(f)$ satisfies $\eqref{defn:Qn1}^{(j)}$, $\eqref{defn:Qn2}^{(j)}$ of Definition \ref{defn:Qn} 
for all $j\in [1,n]$. We conclude with Lemma \ref{lem:Qn-via-Qn-1}.

Conversely, if $f$ has the properties \eqref{defn:Qn1} and \eqref{defn:Qn2} of Definition \ref{defn:Qn}.
Then it clearly satisfies \eqref{lem:eltsQn2} of the statement.
We show that it also satisfies \eqref{lem:eltsQn1}. To this end let $f$ be as in \eqref{defn:Qn0}.
Set $M:=\max\{N_1,\ldots, N_n\}$. Then by condition \eqref{defn:Qn1} of Definition \ref{defn:Qn} 
for all tuples $(i_1,\ldots, i_n)\neq (N_1,\ldots, N_n)$ there exist an $i\in [0,M-1]$ such that $\max_{j=0}^n(N_j-i_j)=M-i$
and $a_{i_1,\ldots, i_n}= s^{M-i} b_{i_1,\ldots, i_n}\in A$
with $b_{i_1,\ldots, i_n}\in A$.
We get
\eq{lem:eltsQn3}{f\cdot (a_{N_1,\ldots, N_n}^{-1} t_1^{M-N_1}\cdots t_n^{M-N_n})=
 (t_1\cdots t_n)^M+ \sum_{i=1}^{M} s^{M-i} (t_1\cdots t_n)^i\cdot g_{M-i},}
where
 \[g_{M-i}=\sum_{(i_1,\ldots, i_n)\atop \max_{j=1}^n(N_j-i_j)=M-i} b_{i_1,\ldots, i_n} t_1^{M-i-N_1+i_1}\cdots t_n^{M-i-N_n+i_n}
\in A[t_1,\ldots, t_n].\]
Thus $f$ satisfies \eqref{lem:eltsQn1}.
\end{proof}

\begin{lem}\label{lem:prodQn}
Let $f,g\in A[t_1,\ldots, t_n]$ be polynomials. Then
\[f\cdot g\in Q_n\Longleftrightarrow f, \,g \in Q_n.\]
\end{lem}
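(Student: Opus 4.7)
The plan is to verify the four defining conditions of Definition \ref{defn:Qn} for $fg$ in terms of those for $f$ and $g$ separately. Three of the four conditions, namely \eqref{defn:Qn2}, \eqref{defn:Qn2.5}, and \eqref{defn:Qn3}, will fall out of elementary multiplicativity arguments in the domain $A[t_1,\ldots,t_n]$, while the remaining condition \eqref{defn:Qn1} will be reduced to the reformulation given by Lemma \ref{lem:eltsQn}.

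For \eqref{defn:Qn2}: viewing $f$ and $g$ iteratively as polynomials in $t_n$ over the domain $A[t_1,\ldots,t_{n-1}]$, the only pair of multi-indices $(i,j)$ with $i+j = N^f + N^g$, $i \le N^f$, $j \le N^g$ (componentwise) is $(i,j) = (N^f, N^g)$, hence the top-multidegree coefficient of $fg$ is $a_{N^f}\cdot b_{N^g}$, which lies in $A^\times$ iff both factors do. For \eqref{defn:Qn2.5}, multiplicativity of the canonical extension of any $v \in \sV$ gives
\[v\bigl(fg/(a_{N^f} b_{N^g})\bigr) = v(f/a_{N^f}) + v(g/b_{N^g}),\]
and each summand is $\le 0$ by \eqref{aux1}, so the left-hand side vanishes iff both summands do. Condition \eqref{defn:Qn3} follows from $(fg)(t_j=0) = f(t_j=0)\,g(t_j=0)$ in the domain $A[t_1,\ldots,\widehat{t_j},\ldots,t_n]$.

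For \eqref{defn:Qn1}, I will invoke Lemma \ref{lem:eltsQn} to replace \eqref{defn:Qn1} together with \eqref{defn:Qn2} by the pair \eqref{lem:eltsQn1}, \eqref{lem:eltsQn2} of that lemma. Condition \eqref{lem:eltsQn2} propagates in both directions because the $t_j$-leading term is multiplicative and $\fm_i A[t_1,\ldots,\widehat{t_j},\ldots,t_n]$ is a prime ideal. For \eqref{lem:eltsQn1}, the direction $(\Leftarrow)$ is obtained by multiplying the two witnessing identities for $f$ and $g$ into an identity of the same shape for $fg$. For $(\Rightarrow)$, given $(fg)\cdot h = (t_1\cdots t_n)^N + s(\cdots)$, setting $h_f := g h$ exhibits $f$ as satisfying \eqref{lem:eltsQn1} with the same $N$; symmetrically, $h_g := fh$ does the job for $g$.

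I do not foresee any real obstacle: the whole point is that Lemma \ref{lem:eltsQn} trivializes the $(\Rightarrow)$ implication for \eqref{defn:Qn1}, because the witness identity \eqref{lem:eltsQn1} is visibly inherited by polynomial factors, whereas arguing directly with the divisibility conditions of \eqref{defn:Qn1} on the coefficients of $f$, $g$, $fg$ would be considerably more delicate.
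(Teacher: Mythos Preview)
Your proof is correct and follows essentially the same approach as the paper: handle conditions \eqref{defn:Qn2}, \eqref{defn:Qn2.5}, \eqref{defn:Qn3} by direct multiplicativity arguments (with the key observation for \eqref{defn:Qn2.5} that $v(f/a_{N^f})\le 0$ and $v(g/b_{N^g})\le 0$ by \eqref{aux1}), and reduce \eqref{defn:Qn1} to the reformulation in Lemma~\ref{lem:eltsQn}, whose conditions visibly pass to factors. The paper is terser about \emph{why} conditions \eqref{lem:eltsQn1} and \eqref{lem:eltsQn2} of Lemma~\ref{lem:eltsQn} descend to $f$ and $g$, but your explicit justification (setting $h_f=gh$, $h_g=fh$ for \eqref{lem:eltsQn1}, and using primality of $\fm_i A[t_1,\ldots,\widehat{t_j},\ldots,t_n]$ for \eqref{lem:eltsQn2}) is exactly what is meant.
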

\begin{proof}
This '$\Longleftarrow$' direction is easy. We prove the other direction.
Clearly $f$ and $g$ satisfy condition \eqref{defn:Qn3} of Definition \ref{defn:Qn}.
By Lemma \ref{lem:eltsQn} $f\cdot g\in Q_n$ implies that $f\cdot g$
satisfies the conditions \eqref{lem:eltsQn1},\eqref{lem:eltsQn2} of Lemma \ref{lem:eltsQn}
and then also each of  $f$ and $g$ satisfies these conditions. Hence 
$f$ and $g$ satisfy \eqref{defn:Qn1}, \eqref{defn:Qn2} of Definition \ref{defn:Qn}.
Furthermore if $a,b$ are the leading coefficients of $f,g$, then $ab$ is the leading coefficient of $f g$ and we have
by assumption $0=v(fg/ab)=v(f/a)+v(g/b)$, for all $v\in \sV$. Since $f/a$, $g/b$ have leading coefficient equal to 1
we have $v(f/a)\le 0$ and $v(g/b)\le 0$, by the formula in \eqref{aux1}. 
Hence $v(f/a)=v(g/b)=0$. Thus  $f,g \in Q_n$.
\end{proof}

\begin{cor}\label{cor:Qn-free-monoid}
$Q_n/A^\times$ is the free abelian monoid generated on polynomials $f\in Q_n$ which are irreducible and are normalized 
such that the leading coefficient is 1.
\end{cor}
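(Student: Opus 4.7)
The plan is to deduce the statement from the unique factorization property of $R := A[t_1, \ldots, t_n]$. Since $A$ is a PID (being a semilocal Dedekind domain), $R$ is a UFD, and its group of units is exactly $A^\times$. The key structural input is Lemma \ref{lem:prodQn}, which says that $Q_n \subset R \setminus \{0\}$ is a divisor-stable submonoid: it contains $A^\times$, is closed under multiplication, and satisfies the saturation property $fg \in Q_n \Longrightarrow f, g \in Q_n$.

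Given $f \in Q_n$, I would write $f = u \cdot p_1 \cdots p_m$ in $R$ with $u \in A^\times$ and the $p_j$ irreducible in $R$ (unique up to permutation and $A^\times$-multiples). By Lemma \ref{lem:prodQn} applied inductively, each $p_j$ lies in $Q_n$. By condition \eqref{defn:Qn2} of Definition \ref{defn:Qn}, its leading coefficient $a_j$ belongs to $A^\times$, so $\tilde{p}_j := a_j^{-1} p_j$ is again in $Q_n$ (using $A^\times \subset Q_n$), remains irreducible in $R$, and now has leading coefficient $1$. Passing to $Q_n/A^\times$ gives $\overline{f} = \overline{\tilde{p}_1} \cdots \overline{\tilde{p}_m}$, which proves that $Q_n/A^\times$ is generated by the classes of these normalized irreducibles.

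For freeness, I would take two such products $\tilde{p}_1 \cdots \tilde{p}_m$ and $\tilde{q}_1 \cdots \tilde{q}_{m'}$ in $Q_n$, each factor irreducible in $R$ with leading coefficient $1$, that agree modulo $A^\times$. They then differ by an element of $A^\times$ inside $R$, and unique factorization in $R$ yields $m = m'$ together with a permutation $\sigma$ and units $c_i \in A^\times$ satisfying $\tilde{p}_i = c_i \tilde{q}_{\sigma(i)}$. Comparing leading coefficients forces $c_i = 1$, hence $\tilde{p}_i = \tilde{q}_{\sigma(i)}$, giving the desired uniqueness.

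The bulk of the work has already been carried out in Lemma \ref{lem:prodQn}; the corollary is essentially a formal consequence of that saturation statement combined with unique factorization in $R$. The only point requiring any care is verifying that normalization and extraction of irreducible factors stay within $Q_n$, and both follow immediately from $A^\times \subset Q_n$ and Lemma \ref{lem:prodQn}, so no genuine obstacle remains.
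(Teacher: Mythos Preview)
Your argument is correct and is precisely the approach the paper takes: the paper's proof is the one-line statement that the corollary ``follows from Lemma~\ref{lem:prodQn} and the fact that $A[t_1,\ldots, t_n]$ is factorial,'' and you have simply unpacked what that means.
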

\begin{proof}
This follows from Lemma \ref{lem:prodQn} and the fact that $A[t_1,\ldots, t_n]$ is factorial.
\end{proof}

\section{Proof of the main theorem}
\begin{no}\label{corr-poly}
Let $(\sC/S, D)$ be as in \ref{assumptions}.
Let $D_{\rm red}=\cup_i D_i$ be the decomposition into irreducible components.
We denote by $A$ the semilocalization of $\sO_\sC$ at the generic points of $D$.
(By assumption \ref{assumptions} (4) there is an open affine subscheme $\Spec B\subset \sC$ containing all 
the generic points of $D$, then $A$ is the localization of $B$ at the union of the prime ideals
corresponding to these points.)
In particular, $A$ is  a semilocal Dedekind domain. Let $s\in A$ be an equation for $D$, i.e.
\[D_{|\Spec A}=\Spec A/(s).\]
 Set $U:= \sC\setminus |D|$ and
\[\sV:=\{v_x\,|\, x\in U^{(1)} \text{with }\ol{\{x\}}\cap |D|\neq \emptyset\},\]
where we denote by $v_x:k(\sC)^\times\to \Z$ the normalized discrete valuation corresponding to $x$.
We have two maps
\eq{corr-poly0}{U\times\square^n\xr{j}\sC\times(\P^1)^n\xl{\alpha} \Spec A\times \A^n,}
where $j$ is the natural open immersion and $\alpha$ is the product of the natural map
$\Spec A\to \sC$ and the composition 
\eq{corr-poly0.5}{\A^n=\Spec k[t_1,\ldots, t_n]\to \Spec k[y_1,\ldots, y_n]=(\P^1\setminus\{\infty\})^n\inj (\P^1)^n,}
where the first map is induced by $y_i\mapsto 1-t_i$.

Let $Z\subset U\times \square^n$ be an integral closed subscheme which is finite and surjective over
$S\times\square^n$ and denote by $\ol{Z}\subset \sC\times(\P^1)^n$ its closure.
In particular $\codim(\ol{Z}, \sC\times(\P^1)^n)=1$.
Denote by $Z_0\subset\sC$ the image of $\ol{Z}$. Since $Z_0$ surjects onto $S$ the dimension formula yields
\[1= {\rm trdeg}(k(Z_0)/k(S))+ \codim(Z_0,\sC).\]
Therefore we arrive in one of the following two cases:
\begin{enumerate}
\item\label{corr-poly1} $\codim(Z_0, \sC)=1$ and hence $\ol{Z}=Z_0\times(\P^1)^n$.
\item\label{corr-poly2} $Z_0=\sC$.
\end{enumerate}
In the second case the scheme-theoretic inverse image $\alpha^{-1}(\ol{Z})$ is an integral closed subscheme
of codimension one in $\Spec A \times\A^n$. Since $A[t_1,\ldots, t_n]$ is factorial there is an  irreducible
polynomial $p(Z)\in A[t_1,\ldots, t_n]$ such that
\eq{corr-poly3}{\alpha^{-1}(\ol{Z})=\Div(p(Z)).}
Notice that $p(Z)$ is only well-defined up to multiplication with an element from $A^\times$.
\end{no}

\begin{lem}\label{lem:descr-corr}
Let $Z\subset U\times\square^n$ be as above. 
If $Z\in \tilde{C}_n(\sC/S,D)$, then one of the following two conditions is satisfied (with the notations from above):
\begin{enumerate}
\item $\codim(Z_0,\sC)=1$ and $Z_0\in C_0(\sC/S,D)$ (i.e. $Z_0$ is finite and surjective over $S$ and contained in $U$).
\item $Z_0=\sC$ and $p(Z)\in Q_n(A,\sV,s)$, with $A$, $\sV$ and $s$ as chosen above.
\end{enumerate}
\end{lem}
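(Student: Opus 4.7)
The proof uses the dichotomy from \ref{corr-poly}. Suppose first that $\codim(Z_0,\sC)=1$, so $\ol Z = Z_0\times(\P^1)^n$. On the normalization $\tilde Z = \tilde{Z_0}\times(\P^1)^n$, the pulled-back divisor $D_{\tilde Z}$ is ``horizontal'' of the form $\nu^*(D_{|Z_0})\times(\P^1)^n$, while $E_{\tilde Z}$ is ``vertical'', supported on $\tilde{Z_0}\times\bigcup_{j=1}^n(\P^1)^{j-1}\times\{1\}\times(\P^1)^{n-j}$. These effective Cartier divisors have no common prime component, so the modulus inequality $D_{\tilde Z}\leq E_{\tilde Z}$ forces $D_{\tilde Z}=0$, i.e., $Z_0\cap|D|=\emptyset$, so $Z_0\subset U$. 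Since $Z_0\subset\sC$ is closed with $\sC/S$ proper and $U/S$ affine, the map $Z_0\to S$ is both proper and affine, hence finite; its surjectivity onto the connected component of $S$ follows from that of $Z\to S\times\square^n$.

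For the remaining case $Z_0=\sC$, I verify the four conditions of $Q_n$ for $p=p(Z)\in A[t_1,\ldots,t_n]$. Condition \eqref{defn:Qn3} follows from the irreducibility of $p$: if $t_j\mid p$ then $p=c\cdot t_j$ for some $c\in A^\times$, so $\ol Z$ is supported on $\sC\times(\P^1)^{j-1}\times\{y_j=1\}\times(\P^1)^{n-j}$ and $Z=\ol Z\cap(U\times\square^n)=\emptyset$, a contradiction. For \eqref{defn:Qn1} and \eqref{defn:Qn2} I invoke Lemma~\ref{lem:eltsQn} and verify its equivalent criteria. The modulus condition $D_{\tilde Z}\leq E_{\tilde Z}$ is equivalent, in the chart $\Spec A\times\A^n$, to the rational function $\theta:=(t_1\cdots t_n)/s$ being regular on the normalization of $\Spec A[t_1,\ldots,t_n]/(p)$, hence integral over $A[t_1,\ldots,t_n]/(p)$; clearing denominators in an integral equation $\theta^M+h_{M-1}\theta^{M-1}+\cdots+h_0=0$ by multiplying with $s^M$ yields the relation of \eqref{lem:eltsQn1}. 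For \eqref{lem:eltsQn2}: if the leading coefficient of $p$ in $t_j$ lay in $\fm_i A[t_1,\ldots,\widehat{t_j},\ldots,t_n]$, then in the infinity-chart $u_j=1/t_j$ the equation $q:=u_j^{N_j}p(\ldots,1/u_j,\ldots)$ would be divisible by $u_j$ modulo $\fm_i$, forcing the generic point of $\{\eta_i\}\times(\P^1)^{j-1}\times\{\infty\}\times(\P^1)^{n-j}$ to lie in $\ol Z$; this contradicts the modulus condition, which implies $\ol Z\cap(\{\eta_i\}\times(\P^1)^n)\subset\{\eta_i\}\times\bigcup_{l=1}^n\{y_l=1\}$.

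Finally for \eqref{defn:Qn2.5}, suppose $v_x\in\sV$ fails, i.e., $v_x(a_{N_1,\ldots,N_n})>v_x(p)$. A local analysis of $\ol Z$ at $(x,\infty,\ldots,\infty)$ --- using a uniformizer $\pi_x$ of $\sO_{\sC,x}$ to rescale the coefficients of $p$ into $\sO_{\sC,x}$, then passing to the chart with $u_j=1/t_j$ --- shows that $\ol Z$ must contain the generic point of $V\times\{(\infty,\ldots,\infty)\}$, where $V=\ol{\{x\}}$. Since $x\in U$ and $\{\infty\}\subset\square$, this generic point already lies in $Z$, whose closure there is the subscheme $(V\cap U)\times\{(\infty,\ldots,\infty)\}$. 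Finiteness of $Z\to S\times\square^n$ would then force $V\cap U\to S$ to be finite, but $V\cap U$ is a strict non-empty open in the irreducible proper $S$-scheme $V$ (as $V\cap|D|\neq\emptyset$), and no such open is proper over $S$, a contradiction. The main technical care in the proof is concentrated in this last step, since the valuations in $\sV$ do not correspond to points of $\Spec A$ and the rescaling by $\pi_x^{v_x(p)}$ is essential to place the coefficients of $p$ in $\sO_{\sC,x}$ before the local analysis.
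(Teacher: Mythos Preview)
Your proof is correct. The handling of the first case and of conditions \eqref{defn:Qn1}, \eqref{defn:Qn2}, \eqref{defn:Qn3} in the second case follows the paper's approach closely (you supply a bit more detail for \eqref{lem:eltsQn2}, but the idea is identical).

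The verification of \eqref{defn:Qn2.5} is where you genuinely diverge. The paper normalizes to leading coefficient $1$, picks a point $y\in\ol{\{x\}}\cap|D|$, and works with the content ideal $\fc_y(f)=\{c\in k(\sC)\mid cf\in\sO_{\sC,y}[t_1,\ldots,t_n]\}$: the modulus condition forces $\ol Z\cap(\{y\}\times\square^n)=\emptyset$, so the defining ideal of $\ol Z$ becomes the unit ideal modulo $\fm_y$ over $(\A^1\setminus\{0\})^n$; using the already-established condition \eqref{defn:Qn1} one pins down the monomial degree and concludes $\fc_y(f)=\sO_{\sC,y}$, whence $v_x(f)=0$ by localization. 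Your route is instead to stay at $x$, pass to the chart at $(\infty,\ldots,\infty)$, and exploit the \emph{finiteness} of $Z\to S\times\square^n$: if \eqref{defn:Qn2.5} failed you exhibit $(x,\infty,\ldots,\infty)\in Z$, forcing $\ol{\{x\}}\cap U$ to be finite over $S$, which is impossible for a strict dense open in an irreducible proper $S$-scheme. Your argument is more geometric and has the minor advantage of not appealing to condition \eqref{defn:Qn1}; the paper's argument stays entirely in the affine $t$-chart and ties the result directly to the polynomial's content. Both are clean; they simply use different parts of the hypothesis (the paper uses the modulus condition over $|D|$, you use finiteness of the correspondence over $S\times\square^n$).
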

\begin{proof}
In case $\codim(Z_0,\sC)=1$ we have $\ol{Z}=Z_0\times(\P^1)^n$.
Hence \eqref{rmk:Cn-expl1} implies $Z_0\cap D=\emptyset$, i.e. $Z_0\subset U$. 
Now assume that $Z_0=\sC$. We have to check that 
\[f:=p(Z)\]
 satisfies \eqref{defn:Qn1}-\eqref{defn:Qn3} of Definition \ref{defn:Qn}. 
In fact \eqref{defn:Qn3} is clear since we start with $Z\subset U\times\square^n$.
For  \eqref{lem:eltsQn2} of Lemma \ref{lem:eltsQn} we have to show that $\ol{Z}$ does not contain
a closed subset of the form $D_i\times (\P^1)^{j-1}\times\{\infty\}\times (\P^1)^{n-j}$, for some $i,j$. 
But this is excluded by the modulus condition \eqref{rmk:Cn-expl1}.
Pulling back the modulus condition along $\alpha$ from \eqref{corr-poly0} we obtain
\eq{lem:descr-corr1}{D_{\alpha^{-1}(\ol{Z})}\prec V(t_1\cdots t_n)_{\alpha^{-1}(\ol{Z})}.}
By definition of $f$ in \eqref{corr-poly3} this means that the element 
\[\frac{t_1\cdots t_n}{s}\in {\rm Frac}\left(\frac{A[t_1,\ldots, t_n]}{(f)} \right)\]
is integral over $A[t_1, \ldots, t_n]/(f)$.
This directly translates into \eqref{lem:eltsQn1} of Lemma \ref{lem:eltsQn}.
Hence $f$ satisfies the conditions \eqref{defn:Qn1} and \eqref{defn:Qn2} of Definition \eqref{defn:Qn}.
Finally, if $x\in U^{(1)}$ with $\ol{\{x\}}\cap D\neq \emptyset$ we want to show that 
$v_x(f/a)=0$, where $a$ is the leading coefficient of $f$. We may assume $a=1$
and hence we have to show $v_x(f)=0$. Take $y\in \ol{\{x\}}\cap|D|$ and 
set $B:=\sO_{\sC,y}$ and $V:=\Spec B$. Denote by $\fc_{y}(f)\subset k(\sC)$ the $B$-submodule 
given by 
\[\fc_y(f)=\{c\in k(\sC)\,|\, c\cdot f\in B[t_1,\ldots, t_n]\}.\]
Then $\ol{Z}\times_{\sC\times (\P^1)^n} (V\times \A^n)$ is the closed subscheme of $\Spec B[t_1,\ldots, t_n]$
given by the ideal $\fc_y(f)\cdot f\cdot B[t_1,\ldots, t_n]$.
Since the leading coefficient of $f$ is equal to 1, we have $\fc_y(f)\subset B$.
The modulus condition \eqref{rmk:Cn-expl1} tells us 
in particular that $\ol{Z}\cap (|D|\times \square^n)=\emptyset$.
Hence  $\ol{Z}\times_{\sC\times(\P^1)^n} (\{y\}\times (\A^1\setminus\{0\})^n)=\emptyset$. If $\fm \subset B$ denotes the maximal ideal
this means that 
\[\frac{\fc_y(f)\cdot f\cdot B[t_{1}^{\pm 1},\ldots, t_n^{\pm 1}]}{\fm \cdot B[t_{1}^{\pm 1},\ldots, t_n^{\pm 1}]}
=\frac{B[t_{1}^{\pm 1},\ldots, t_n^{\pm 1}]}{\fm \cdot B[t_{1}^{\pm 1},\ldots, t_n^{\pm 1}]},\]
i.e. there exists an element $b\in\fc_y(f)\subset B$ 
and $M_1, \dots, M_n \in \Z$
such that 
\eq{lem:descr-corr2}{b\cdot f\in   (t_1^{M_1}\cdots t_n^{M_n})\cdot B^\times +\fm B[t_{1}^{\pm 1},\ldots, t_n^{\pm 1}]. }
Now write $f$ as in \eqref{defn:Qn0} (with $a_{N_1,\ldots, N_n}=1$). We claim that $M_i=N_i$ for all $i$.
Indeed, \eqref{lem:descr-corr2} yields that $b\cdot a_{M_1,\ldots, M_n}\in B^\times$. If $D_j$ is an irreducible component
of $D$ passing through $y$ and with generic point $\eta_j$, then we have a 
natural map $B\to \sO_{\sC,\eta_j}$  and hence $b\cdot a_{M_1,\ldots, M_n}\in \sO_{\sC,\eta_j}^\times$.
In particular $a_{M_1,\ldots, M_n}\in \sO_{\sC, \eta_j}^\times$ which by \eqref{defn:Qn1} of Definition \ref{defn:Qn}
is only possible if $M_i=N_i$. Since $a_{N_1,\ldots, N_n}=1$ \eqref{lem:descr-corr2} yields that $b\in B^\times$ and hence $\fc_y(f)= B$.
If $\pi\in \sO_{\sC, x}$ is a local parameter then we obtain
\[\pi^{-v_x(f)}\cdot\sO_{\sC,x}= \{c\in k(\sC)\,|\, c\cdot f\in \sO_{\sC,x}[t_1,\ldots, t_n]\}=\fc_y(f)\cdot\sO_{\sC,x}
=\sO_{\sC,x},\]
i.e. $v_x(f)=0$. This proves the lemma.
\end{proof}

\begin{prop}\label{prop:descr-corr}
Denote by $\tilde{C}_n(\sC/S,D)^{{\rm eff}}$ the free submonoid  of $\tilde{C}_n(\sC/S, D)$ generated by prime 
correspondences. Then with the notation from \ref{corr-poly} above there is an isomorphism of monoids for all $n\ge 1$
\eq{prop:descr-corr0}{\tilde{C}_n(\sC/S,D)^{{\rm eff}}\xr{\simeq} C_0(\sC/S,D)^{{\rm eff}}\oplus (Q_n(A,\sV,s)/A^\times), }
which is given by 
\[Z\subset U\times\square^n \text{(integral)}\mapsto
          \begin{cases}  Z_0, & \text{if }\ol{Z}=Z_0\times(\P^1)^n,\\
                              p(Z) \text{ mod }A^\times, &\text{else.} \end{cases}\]
\end{prop}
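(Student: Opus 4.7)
The plan is to exploit that both sides are free abelian monoids: the source on prime correspondences, and the target on the generators of $C_0(\sC/S,D)^{\rm eff}$ together with the irreducible normalized polynomials in $Q_n(A,\sV,s)/A^\times$ (by Corollary \ref{cor:Qn-free-monoid}). Lemma \ref{lem:descr-corr} shows that the proposed assignment sends each prime correspondence $Z$ to a single such generator: $Z_0$ in case \eqref{corr-poly1}, or the class of $p(Z)$ in case \eqref{corr-poly2}. This yields a well-defined monoid homomorphism, and it therefore suffices to prove a bijection on generators.

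For injectivity, each prime correspondence is recovered from its image: in case \eqref{corr-poly1}, $Z$ is determined by $\ol Z = Z_0 \times (\P^1)^n$; in case \eqref{corr-poly2}, the integral codimension-one subscheme $\ol Z \subset \sC \times (\P^1)^n$ dominates $\sC$ and satisfies $\alpha^{-1}(\ol Z) = V(p(Z))$, which is a nonempty open in $\ol Z$ because $\alpha$ is flat and contains the generic point of $\sC$ in its image; hence $\ol Z$ is recovered as the closure of $V(p(Z))$ in $\sC \times (\P^1)^n$. For the easy half of surjectivity, a given $Z_0 \in C_0(\sC/S,D)^{\rm eff}$ is the image of the prime correspondence $Z_0 \times \square^n$, which trivially satisfies the modulus condition since $Z_0 \subset U$.

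For the harder half, let $f \in Q_n(A,\sV,s)$ be irreducible with leading coefficient $1$ (a free generator of $Q_n/A^\times$ by Corollary \ref{cor:Qn-free-monoid}). I define $\ol Z \subset \sC \times (\P^1)^n$ to be the Zariski closure of $\alpha(V(f))$ and set $Z := \ol Z \cap (U \times \square^n)$. Irreducibility of $V(f)$ makes $\ol Z$ integral, and a transcendence-degree computation at the generic point of $V(f)$, whose residue field is $\Frac(A[t_1,\ldots,t_n]/(f))$, gives $\dim \ol Z = \dim \sC + n - 1$. Hence $\ol Z$ has codimension one and dominates $\sC$, placing us in case \eqref{corr-poly2}, and by construction $\alpha^{-1}(\ol Z) = V(f)$, so $p(Z) = f \bmod A^\times$.

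What remains, and is the main obstacle, is to verify $Z \in \tilde C_n(\sC/S,D)^{\rm eff}$. The modulus condition $D_{\ol Z} \prec E_{\ol Z}$ will pull back via the flat $\alpha$ to $\Div(s) \le \Div(t_1\cdots t_n)$ on the normalization $\tilde V(f)$, which is precisely the integrality of $t_1\cdots t_n/s$ over $A[t_1,\ldots,t_n]/(f)$ supplied by Lemma \ref{lem:eltsQn}\eqref{lem:eltsQn1}; checking that this inequality upstairs implies $\nu^* D_{\ol Z} \le \nu^* E_{\ol Z}$ at every relevant codimension-one point of $\tilde{\ol Z}$ requires a careful local analysis, noting that only those codimension-one points of $\tilde{\ol Z}$ lying over $|D|$ matter (elsewhere the inequality is trivial), and these are controlled by the flat base change along $\alpha$. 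Once the modulus condition is established, taking supports yields $\ol Z \cap (|D| \times \square^n) = \emptyset$, so $Z$ is closed in $\sC \times \square^n$. The composition $Z \hookrightarrow \sC \times \square^n \to S \times \square^n$ is then proper (by properness of $\sC \to S$) and affine (by affineness of $U \to S$), hence finite of finite type; surjectivity onto $S \times \square^n$ follows from the dimension count together with properness, completing the verification that $Z$ is a prime correspondence mapping to $f \bmod A^\times$.
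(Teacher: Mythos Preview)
Your overall strategy matches the paper's: reduce to a bijection on free generators, with Lemma~\ref{lem:descr-corr} giving well-definedness and Corollary~\ref{cor:Qn-free-monoid} describing the generators on the target. Injectivity and surjectivity onto the $C_0$-factor are handled the same way. The divergence, and the gap, is in your verification of the modulus condition for the inverse image of an irreducible $f\in Q_n$.

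Your claim that the relevant codimension-one points of $\tilde{\ol Z}$ ``are controlled by the flat base change along $\alpha$'' is not justified. The image of $\alpha$ is (the image of) $\Spec A\times\A^n$, and this misses two kinds of points over $|D|\times(\P^1)^n$: those with some coordinate equal to $\infty$, and those lying over non-generic points of $|D|$ (since $A$ is only the semilocalization at the generic points of $D$). Your argument, which amounts to the integrality of $t_1\cdots t_n/s$ over $A[t_1,\ldots,t_n]/(f)$ via Lemma~\ref{lem:eltsQn}\eqref{lem:eltsQn1}, uses only properties \eqref{defn:Qn1} and \eqref{defn:Qn2} of $Q_n$ and checks the inequality only at the generic points of $D$ inside $\A^n$; it does not reach the remaining points. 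Correspondingly, you never invoke properties \eqref{defn:Qn2.5} and \eqref{defn:Qn3}, both of which are essential.

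The paper fills exactly these holes. Property \eqref{defn:Qn2} is used to show that $\ol Z$ contains no subset of the form $D_i\times(\P^1)^{j-1}\times\{\infty\}\times(\P^1)^{n-j}$, so every component of $\ol Z\cap(D\times(\P^1)^n)$ already lies in $D\times\A^n$ and the $\infty$-issue disappears. Property \eqref{defn:Qn2.5} together with the normality of $\sC$ forces the coefficients of $f$ (normalized to leading coefficient $1$) to extend to regular functions on an open $V\subset\sC$ containing all of $|D|$, not merely its generic points; the modulus inequality is then verified locally on $V\times\A^n$ by the same integrality computation, now valid over the whole of $D$. Finally, property \eqref{defn:Qn3} is what guarantees $Z=\ol Z\cap(U\times\square^n)\neq\emptyset$, a point your dimension count for $\ol Z$ does not by itself establish. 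Once these are in place, your concluding argument (closed in $\sC\times\square^n$, proper and affine over $S\times\square^n$, hence finite and surjective) is the same as the paper's.
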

\begin{proof}
The map is well-defined by Lemma \ref{lem:descr-corr}. It is clearly injective and surjects onto the generators of the first factor.
Let $f\in Q_n(A,\sV,s)$ be a non-constant irreducible polynomial. Then with the notation from \ref{corr-poly} it defines a point 
$(f)\in \Spec A\times \A^n$ and we denote by $\ol{Z}$ the closure of $\alpha((f))$ in $\sC\times (\P^1)^n$ and 
set $Z:=j^{-1}(\ol{Z})$. We claim 
\eq{prop:descr-corr1}{Z\in \tilde{C}_n(\sC/S,D).}
Then clearly $p(Z)=f$ mod $A^\times$ and the surjectivity follows from Corollary \ref{cor:Qn-free-monoid}.
We prove \eqref{prop:descr-corr1}. Since $f$ satisfies \eqref{defn:Qn3} of Definition \ref{defn:Qn}, we see that
$Z$ is not empty and is henceforth an integral closed subscheme of codimension 1 inside $U\times\square^n$.
Since $f$ is non-constant $\ol{Z}$ surjects onto $\sC$. Further the property \eqref{defn:Qn2} of Definition \ref{defn:Qn} 
implies that $\ol{Z}$ does not contain closed subschemes of the form
$D_i\times (\P^1)^{j-1}\times\{\infty\}\times (\P^1)^{n-j}$, for any $i,j$.
Hence there is no irreducible component of $\ol{Z}\cap (D\times(\P^1)^n)$
 which maps to $\infty\in \P^1$ under any of the $n$ projections.
Thus it suffices to check the modulus condition \eqref{rmk:Cn-expl1} after pulling $\ol{Z}$ back along 
the composition
\[\sC\times \A^n\xr{\id_\sC\times\eqref{corr-poly0.5}}\sC\times (\P^1\setminus\{\infty\})^n\inj \sC\times(\P^1)^n.\]
To this end write $f$ as in \eqref{defn:Qn0}; we may assume $a_{N_1,\ldots, N_n}=1$.
Property \eqref{defn:Qn2.5} of Definition \ref{defn:Qn} then implies 
that there exists an open subset $V\subset \sC$ with $D\subset V$ such that 
$a_{i_1,\ldots, i_n}\in \sO_{\sC}(V)$. (Here we use that $\sC$ is normal.)
Set $M:=\max\{N_1,\ldots, N_n\}$ and denote by $\eta_j\in D$ the generic points.
Then by \eqref{defn:Qn1} of Definition \ref{defn:Qn} the $a_{i_1,\ldots, i_n}$ map to zero in 
$\sO_{(M-i)\cdot D,\eta_j}$ for all $j$ and all tuples $(i_1,\ldots, i_n)$ with $\max_{j=1}^n(N_j-i_j)= M-i$, for some
$i\in [0,M-1]$. Since $D$ has no embedded components, this implies that
$a_{i_1,\ldots, i_n}\in H^0(V, \sO_\sC(-(M-i)\cdot D))$, for these tuples $(i_1,\ldots, i_n)$.
Therefore, if $V'\subset V$ is an open affine subset on which $D$ is given by an equation $\sigma$ one 
gets a similar equation as in  \eqref{lem:eltsQn3} and concludes 
 that $t_1\cdots t_n/\sigma$ is integral over $\sO(V')[t_1,\cdots, t_n]/(f)$.
This directly translates into the modulus condition $\eqref{rmk:Cn-expl1}$ over $V'$. 
Since $V'\subset V$ was arbitrary the modulus condition holds in general. 
In particular $Z$ is closed in $\sC\times \square^n$ and hence is proper over $S\times\square^n$.
Since $U\to S$ is affine so is $Z\to S\times\square^n$ and therefore $Z$ is finite over $S\times \square^n$.
Then $Z$ is also surjective onto $S\times \square^n$ for dimension reasons and hence $Z\in \tilde{C}_n(\sC/S,D)$.
\end{proof}

\begin{no}\label{Z(f)}
Let $f\in Q_{n+1}$ be a non-constant irreducible polynomial and write $f$ as in \eqref{defn:Qn0}.
Then for $j\in [1, n+1]$ and $\e\in \{0,\infty\}$ we define the correspondence
\[Z_{j,\e}(f)\in C_0(\sC/S,D)^{\rm eff}\]
as follows: For a prime correspondence $Z\in C_0(\sC/S,D)$ denote by
$v_Z: k(\sC)^\times\to \Z$ the corresponding normalized discrete valuation;
it extends canonically to valuations on 
$k(\sC\times\square^n)$ and on $k(\sC\times\square^{n+1})$, see \ref{aux1}.
Then 
\[Z_{j,\e}(f):=\sum_{Z\in C_0(\sC/S,D)} (-v_Z(f)+b_{j,\e}(f)_Z) \cdot Z,\]
with
\[b_{j,\e}(f)_Z:= v_{Z}(f(t_1,\ldots, t_{j-1}, \e+1, t_{j},\ldots, t_{n})).\]
(By convention we put $\infty+1 =\infty$. We also use the notation from Lemma \ref{lem:eltsQn}.)
Notice that $b_{j,\e}(f)_Z\ge \min\{v_Z(a_{i_1,\ldots, i_{n+1}})\}$ and hence $-v_Z(f)+b_{j,\e}(f)_Z\ge 0$, for all $Z$. 
Also for $u\in A^\times$ we have $Z_{j,\e}(f)=Z_{j,\e}(uf)$. 
If $f=\prod_i f_i\in Q_{n+1}$ with $f_i$ irreducible then we set
$Z_{j,\e}(f)=\sum_i Z_{j,\e}(f_i).$
By Corollary \ref{cor:Qn-free-monoid} this yields a morphism of monoids 
\[Z_{j,\e}: Q_{n+1}/A^\times\to C_0(\sC/S,D)^{\rm eff}.\]
Denote by $(Q_{n+1}/A^\times)^{\rm gp}$ the group generated by the monoid $Q_{n+1}/A^\times$.
Then we obtain a group homomorphism
\[Z_{j,\e}: (Q_{n+1}/A^\times)^{\rm gp}\to C_0(\sC/S,D), \quad f/g\mapsto Z_{j,\e}(f)-Z_{j,\e}(g).\]
\end{no}

\begin{lem}\label{lem:Qn-boundary}
The following diagram commutes 
for all $n\ge 1$, $j \in [1, n]$ and $\e\in \{0,\infty\}$
\[\xymatrix{\tilde{C}_{n+1}(\sC/S,D)\ar[d]_{\eqref{prop:descr-corr0}}^{\simeq}\ar[rr]^{\partial^j_{\e}}& &
                                \tilde{C}_n(\sC/S,D)\ar[d]_{\simeq}^{\eqref{prop:descr-corr0}}\\
                  C_0(\sC/S,D)\oplus (Q_{n+1}/A^\times)^{\rm gp}\ar[rr] 
                            &         &  C_0(\sC/S,D)\oplus (Q_{n}/A^\times)^{\rm gp},}\]
where the lower map is given by 
\[\left(Z,\frac{f(t_1,\ldots, t_{n+1})}{g(t_1,\ldots, t_{n+1})}\right)\mapsto 
 \left(Z+Z_{j,\e}(f/g), 
  \frac{\rho(f(t_1,\ldots, t_{j-1}, 1+\e, t_{j},\ldots, t_n))}{\rho(g(t_1,\ldots, t_{j-1}, 1+\e, t_{j},\ldots, t_n))}\right),\]
with $Z_{j,\e}(f)$ as defined in \ref{Z(f)} and $\rho(f)$ as in Definition \ref{defn:rho-f}.
In particular the following diagram commutes:
\[\xymatrix{\tilde{C}_1(\sC/S,D)\ar[rrrr]^{\partial_0-\partial_\infty}\ar[d]_{\simeq} & & & & C_0(\sC/S,D)\ar[d]^{\simeq}\\
             C_0(\sC/S,D)\oplus (Q_1/A^\times)^{\rm gp}\ar[rrrr]^-{(Z,f(t))\mapsto Z_{1,0}(f)-Z_{1,\infty}(f) }& & & &
                                  C_0(\sC/S,D).
}\]
\end{lem}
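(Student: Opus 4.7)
The plan is to check both diagrams on generators of $\tilde{C}_{n+1}(\sC/S,D)$, after passing to group completions via Proposition~\ref{prop:descr-corr}. The two types of generators (modulo $A^\times$) are prime cycles $Z \in C_0(\sC/S,D)$, giving correspondences $Z_0 \times \ol{\square}^{n+1}$, and non-constant irreducible polynomials $f \in Q_{n+1}/A^\times$, giving correspondences $Z_f$. Both horizontal maps are group homomorphisms: the bottom by construction of $Z_{j,\e}$ in~\ref{Z(f)}, and by multiplicativity of $\rho$ and of the evaluation $f \mapsto f(t_1, \ldots, 1+\e, \ldots, t_n)$. Hence it suffices to treat each type separately.

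For $Z \in C_0(\sC/S,D)$, the correspondence $Z_0 \times \ol{\square}^{n+1}$ pulls back along the face inclusion to $Z_0 \times \ol{\square}^n$, whose $\tilde{C}_n$-image under \eqref{prop:descr-corr0} is again $Z_0 \in C_0(\sC/S,D)$. This matches the formula, since $Z_{j,\e}(1) = 0$ (both $v_Z(1)$ and $b_{j,\e}(1)_Z$ vanish) and $\rho(1) = 1$.

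For irreducible $f \in Q_{n+1}$, the correspondence $Z_f$ extends to the prime Cartier divisor $\ol{Z_f} \subset \sC \times (\P^1)^{n+1}$, cut out over $\Spec A \times \A^{n+1}$ by $f$. The boundary $\partial^j_\e(Z_f)$ is the pullback of $\ol{Z_f}$ along the face inclusion $\iota_{j,\e}\colon \sC \times (\P^1)^n \hookrightarrow \sC \times (\P^1)^{n+1}$. Over $\Spec A \times \A^n$ this pullback is the divisor of $f_{j,\e} := f(t_1, \ldots, 1+\e, \ldots, t_{n+1})$; the case $\e = \infty$ is handled in the chart $u_j = 1/t_j$, in which $\ol{Z_f}$ is locally defined by $u_j^{N_j} f(\ldots, 1/u_j, \ldots)$ and the restriction to $u_j = 0$ picks off the leading coefficient of $f$ in $t_j$. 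Writing $f_{j,\e} = t_1^{m_1} \cdots \widehat{t_j} \cdots t_{n+1}^{m_{n+1}} \cdot \rho(f_{j,\e})$, Lemmas~\ref{lem:Qn-via-Qn-1} and~\ref{lem:eltsQn}, together with condition~\eqref{defn:Qn2.5} of Definition~\ref{defn:Qn} for $f$, show that $\rho(f_{j,\e}) \in Q_n(A,\sV,s)$. The monomial prefactors $t_i^{m_i}$ give divisors supported on $\{y_i = 1\}$, outside $\square^n$, and drop from the correspondence; so by Proposition~\ref{prop:descr-corr} the $Q_n$-component of $\partial^j_\e(Z_f)$ is precisely $\rho(f_{j,\e})$.

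The remaining $C_0$-contribution comes from components $\ol{Z_0} \times (\P^1)^n$ with $Z_0 \in C_0(\sC/S,D)$; these are invisible in the chart $\Spec A \times \A^n$ since $Z_0$ is disjoint from the generic points of $D$. Their multiplicities are computed by viewing $f$ as a rational function on $\sC \times (\P^1)^{n+1}$ and using the divisor decomposition
\[\Div(f) = \ol{Z_f} + \sum_{Z_0} v_{Z_0}(f) \cdot \bigl(\ol{Z_0} \times (\P^1)^{n+1}\bigr) - \sum_{i=1}^{n+1} N_i \cdot \bigl(\sC \times \{t_i = \infty\}\bigr),\]
together with the analogous decomposition for $\Div(f_{j,\e}) = \iota^*_{j,\e}\Div(f)$ on the face. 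Matching the coefficient at $\ol{Z_0} \times (\P^1)^n$ on both sides yields multiplicity $v_{Z_0}(f_{j,\e}) - v_{Z_0}(f) = -v_{Z_0}(f) + b_{j,\e}(f)_{Z_0}$, exactly the coefficient of $\ol{Z_0}$ in $Z_{j,\e}(f)$. The main technical point is this vertical-component bookkeeping, which relies on the canonical extension~\eqref{aux1} of $v_{Z_0}$ to polynomial rings and on the observation that $v_{\fm_i}(f) = 0$ (by conditions~\eqref{defn:Qn1}--\eqref{defn:Qn2} of Definition~\ref{defn:Qn}), so that no spurious contributions from $|D|$ appear. The second diagram is then the case $n=1$: on the $C_0$-summand $\partial_0 - \partial_\infty$ cancels, and on the $Q_1$-part the target $C_0(\sC/S,D)$ has no $Q_0$ component, leaving only $Z_{1,0}(f) - Z_{1,\infty}(f)$.
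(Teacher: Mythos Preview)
Your argument is correct and follows essentially the same strategy as the paper's proof: both check the diagram on the two types of prime generators (degenerate $Z_0\times\square^{n+1}$ versus dominant $Z_f$), and for the dominant case both compute the multiplicity of each prime correspondence in $\partial^j_\e(Z_f)$ by distinguishing the horizontal contribution (giving $\rho(f_{j,\e})$) from the vertical ones (giving the coefficients $-v_{Z_0}(f)+b_{j,\e}(f)_{Z_0}$). The organizational difference is that the paper localizes at the generic point of each prospective prime correspondence and reads off the multiplicity as $v_{x'}(\fp R/(t_{n+1}-1))$ directly from the defining ideal, treating one case ($j=n+1$, $\e=0$) explicitly; you instead write down the global decomposition of $\Div(f)$ on $\sC\times(\P^1)^{n+1}$ and pull it back along the face, which packages the same local computations more uniformly and handles $\e=\infty$ via the chart $u_j=1/t_j$ rather than by an appeal to symmetry. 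One small point to make explicit in your write-up: to ensure the vertical sum really runs only over $Z_0\in C_0(\sC/S,D)$ you need both $v_{\fm_i}(f)=0$ (which you note) and $v(f)=0$ for $v\in\sV$ (condition~\eqref{defn:Qn2.5} after normalizing the leading coefficient to $1$), since there can be codimension-one points of $U$ whose closure in $\sC$ meets $|D|$ and which are therefore not in $C_0(\sC/S,D)$.
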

\begin{proof}
Let $Z\in C_{n+1}(\sC/S,D)$ be a prime correspondence. If $Z=Z_0\times\square^{n+1}$ with $Z_0\in C_0(\sC/S,D)$
a prime correspondence, then $\partial^j_\e(Z)=Z_0\times\square^n$. Therefore  
$Z\mapsto \partial^j_\e(Z)$ translates under  \eqref{prop:descr-corr0} into $(Z_0,0)\mapsto (Z_0,0)$.
Now assume $Z$ dominates $\sC$ and set $f:=p(Z)$ with the notation from  \eqref{corr-poly3}
(this is well defined up to multiplication with an element from $A^\times$).
Denote by $Z'\subset U\times (\P^1\setminus\{0\})^{n+1}$ the image of $Z$ under the isomorphism
\eq{lem:Qn-boundary1}{U\times(\P^1\setminus\{1\})^{n+1}\xr{\simeq} U\times (\P^1\setminus\{0\})^{n+1},}
which is induced by the unique isomorphism of $\sO_U$-algebras
 $\sO_U[t_1,\ldots, t_{n+1}]\to \sO_U[y_1,\ldots, y_{n+1}]$ given by $t_i\mapsto 1-y_i$.
We have to compute $\partial^j_{1+\e}(Z')$. We restrict to the case $\e=0$ and $j=n+1$, the other cases are similar.
Set $B=H^0(U,\sO_{\sC})$ and $K={\rm Frac}(B)={\rm Frac}(A)$.
Then by definition of $f$ the ideal of $Z'$ in $U\times (\P^1\setminus\{0,\infty\})^{n+1}$ is given by
\[\fp= \{cf\,|\, c\in K \text{ with } cf\in B[t_1,\ldots, t_{n+1}] \}\cdot 
                                B[t_1,\ldots, t_{n+1},\frac{1}{t_1\cdots t_{n+1}}].\]
Let $x$ be the generic point of a prime correspondence in $C_n(\sC/S,D)$ and 
denote by $x'\in U\times (\P^1\setminus\{0\})^n$ its image under the isomorphism \eqref{lem:Qn-boundary1}.
Denote by $R$ the localization of $B[t_1,\ldots, t_{n+1}]$ with respect to the prime ideal 
defined by $x'\times \{1\}$ and by $v_{x'}: {\rm Frac}(R/(t_{n+1}-1))^\times\to \Z$ the valuation
defined by $x'$. Then the multiplicity with which the correspondence $\ol{\{x'\}}$ occurs
in $\partial_1^{n+1}(Z')$ is equal to $v_{x'}(\fp R/(t_{n+1}-1))$. 
(Notice that $\fp R/(t_{n+1}-1)$ is a principal ideal so that the latter expression makes sense.)
Now if $x'$ dominates $\sC$ we have $K\subset R$ so that
$\fp R= (f)$ and 
\[v_{x'}(\fp R/(t_{n+1}-1))=v_{x'}(f(t_1,\ldots, t_n,1))=v_{x'}(\rho(f(t_1,\ldots, t_n,1))).\] 
Otherwise there is a point $x_0\in U^{(1)}$ which dominates $S$ and such that
$x'$ is the generic point of $x_0\times (\P^1\setminus\{0,\infty\})^n$.
In this case $v_{x'}$ extends the valuation $v_{x_0}$ defined by $x_0$
and if $\pi\in \sO_{U, x_0}$ is a local parameter we have
\[\fp R= (\pi^{-v_{x'}(f)}\cdot f).\]
Hence $v_{x'}(\fp R/(t_{n+1}-1))= -v_{x'}(f)+ b_{n+1,0}(f)_{\ol{\{x'\}}}$.
This proves the lemma.
\end{proof}

\begin{lem}\label{lem:Q1-G}
Let $f\in Q_1$ be non-constant. Then $f(1)/f(\infty)\in G(\sC,D)$ (see \ref{not:OXD} for the notation)
and $Z_{1,0}(f)-Z_{1,\infty}(f)=\Div (f(1)/f(\infty))$.
\end{lem}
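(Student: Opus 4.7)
My plan is to prove the lemma in two steps, preceded by a multiplicativity reduction. The map $f\mapsto Z_{1,\e}(f)$ on $Q_1/A^\times$ is extended by linearity from irreducibles (see \ref{Z(f)}), $\Div$ is a group homomorphism, and $G(\sC,D)$ is a subgroup of $k(\sC)^\times$; hence both sides of the asserted identity, as well as the assertion $f(1)/f(\infty)\in G(\sC,D)$ itself, are multiplicative in $f\in Q_1/A^\times$. By Corollary \ref{cor:Qn-free-monoid} together with Lemma \ref{lem:prodQn} I may assume $f$ is irreducible. Write $f=a_N t^N+\cdots+a_0\in A[t]$ with $a_N\in A^\times$ and $a_i\in s^{N-i}A$ for $i<N$. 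Then $g:=f(1)/f(\infty)=f(1)/a_N$ satisfies $g-1=\sum_{i<N}a_i/a_N\in sA$; in particular $f(1)\ne 0$ (otherwise $a_N\in sA$, contradicting $a_N\in A^\times$), so $g\in k(\sC)^\times$.

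The first step is to show $g\in G(\sC,D)$ by proving $g-1\in I_D\sO_{\sC,y}$ for every $y\in|D|$; this forces $g\in\sO_{\sC,y}^\times$ and $g\equiv 1\pmod{I_D}$. Since $\sC$ is normal, $\sO_{\sC,y}$ is the intersection of its localizations at height-one primes, which correspond to the points $x\in\sC^{(1)}$ with $y\in\overline{\{x\}}$; and since $D$ is Cartier, $I_D\sO_{\sC,y}$ is principal. Hence it suffices to check, for each $i<N$ and each such $x$, the inequality $v_x(a_i/a_N)\ge v_x(I_D)$. Two cases arise. If $x=\eta_j$ is the generic point of a component $D_j$ passing through $y$, then $v_{\eta_j}(a_N)=0$ (as $a_N\in A^\times$), and condition (1) of Definition \ref{defn:Qn} gives $v_{\eta_j}(a_i/a_N)\ge (N-i)n_j\ge n_j=v_{\eta_j}(I_D)$ for $i<N$. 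If instead $x\in U^{(1)}$ with $y\in\overline{\{x\}}$, then $v_x\in\sV$, so condition (3) combined with the Gauss extension formula \eqref{aux1} gives $v_x(a_i/a_N)\ge 0=v_x(I_D)$, the latter equality holding because $\overline{\{x\}}$ is not a component of $D$. Summing over $i<N$ yields $g-1\in I_D\sO_{\sC,y}$, as desired.

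The second step is the divisor identity. Having $g\in G(\sC,D)$, the divisor $\Div(g)$ has no component supported in $|D|$, hence defines an element of ${\rm Div}(\sC,D)$, identified with $C_0(\sC/S,D)$ via Lemma \ref{lem:Div-C}. Unwinding the definitions in \ref{Z(f)}, using $f(\infty)=a_N$ and noting that the $-v_Z(f)$ terms cancel in the difference, I compute
\[Z_{1,0}(f)-Z_{1,\infty}(f)=\sum_Z\bigl(v_Z(f(1))-v_Z(a_N)\bigr)Z=\sum_Z v_Z(g)\,Z,\]
summed over prime correspondences $Z\in C_0(\sC/S,D)$, that is, prime divisors on $\sC$ contained in $U$. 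By Step 1 the contributions at generic points of $D$ vanish, so the right-hand side coincides with $\Div(g)$.

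The main obstacle is the first step, and more precisely the refined statement that $a_i/a_N$ lies in the ideal $I_D\sO_{\sC,y}$ rather than merely being regular there. The argument requires combining conditions (1) and (3) of Definition \ref{defn:Qn} with the normality of $\sC$ to reduce to a codimension-one check, where (1) handles the generic points of $D$ and (3) handles the other divisors meeting $D$.
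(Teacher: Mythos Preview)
Your proof is correct and follows essentially the same approach as the paper's own argument, which is very terse (``immediate from Definition~\ref{defn:Qn}'' for the first part, and an appeal to \eqref{not:OXD1}, Lemma~\ref{lem:Div-C}, and the definition in \ref{Z(f)} for the second). You have simply unpacked these references: your Step~1 spells out how conditions \eqref{defn:Qn1}, \eqref{defn:Qn2}, \eqref{defn:Qn2.5} of Definition~\ref{defn:Qn} force $f(1)/f(\infty)\in G(\sC,D)$ via the normal codimension-one description in \ref{not:OXD}, and your Step~2 is the direct computation the paper alludes to. The multiplicativity reduction to irreducible $f$ is a harmless precaution (the formula in \ref{Z(f)} is stated for irreducibles), though the defining expression $-v_Z(f)+b_{j,\e}(f)_Z$ is itself additive in $f$, so the reduction is not strictly needed.
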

\begin{proof}
The first part is immediate from Definition \ref{defn:Qn}. The second part follows from the exact sequence
\eqref{not:OXD1}, Lemma \ref{lem:Div-C} and 
the definition of $Z_{j,\e}(f)$ in \ref{Z(f)}.
\end{proof}

\begin{defn}\label{defn:NQn} For $n\ge 1$ we define $NQ_n(A,\sV,s):=NQ_n$ to be the set of
all elements in ${\rm Frac}(A[t_1,\ldots, t_n])^\times/A^\times$
such that:
\begin{enumerate}
\item\label{defn:NQn1} 
it has a representation $f/g$ with $f,g\in Q_n$
satisfying \eqref{defn:NQn2}-\eqref{defn:NQn4} below.
\item\label{defn:NQn2} 
\[\beta_{j,0}(f/g):=\frac{\rho(f(t_1, \ldots, t_{j-1}, 1, t_{j+1}, \ldots, t_n))}{\rho(g(t_1, \ldots, t_{j-1}, 1, t_{j+1}, \ldots, t_n))}
                  \in A^\times,\quad \text{for all } j\in [2,n].\]
\item\label{defn:NQn3} 
\[\beta_{j,\infty}(f/g):=
  \frac{\rho(f(t_1, \ldots, t_{j-1}, \infty, t_{j+1}, \ldots, t_n))}{\rho(g(t_1, \ldots, t_{j-1}, \infty, t_{j+1}, \ldots, t_n))}
                  \in A^\times,\quad \text{for all } j\in [1,n].\]
\item\label{defn:NQn4} \[\beta_{j,\e}(f/g)/\beta_{j',\e'}(f/g)\in H^0(\sC, \sO^\times_{\sC|D}),\]
where $(j,\e), (j',\e')\in ([1,n]\times\{\infty\})\cup ([2,n]\times\{0\})$
 and we use the notation from \ref{not:OXD}.
\end{enumerate}
Notice that $NQ_n$ is a group.
We obtain a homological complex
\[NQ_\bullet: \ldots\to NQ_n\xr{\delta_n} NQ_{n-1}\xr{\delta_{n-1}} \ldots \to NQ_1\xr{\delta_1} NQ_0:=C_0(\sC/S,D)\to 0,\]
where 
\[\delta_n\left(\frac{f(t_1,\ldots, t_n)}{g(t_1,\ldots, t_n)}\right)=\begin{cases} 
      \frac{\rho(f(1, t_1,\ldots, t_{n-1}))}{\rho(g(1, t_1,\ldots, t_{n-1}))}, &\text{if } n\ge 2\\
                                                       \Div(\frac{f(1)}{g(1)}\frac{g(\infty)}{f(\infty)}),&\text{if }n=1. \end{cases}\]
\end{defn}

\begin{cor}\label{cor:NQn-corr}
The isomorphism from Proposition \ref{prop:descr-corr} induces an isomorphism 
\[NC_\bullet(\sC/S,D)\xr{\simeq} NQ_\bullet,\]
where $NC_\bullet(\sC/S,D)$ is the normalized chain complex from Lemma \ref{lem:NSH}. 
In particular, by the latter lemma $NQ_\bullet$ is homotopy equivalent to $C_\bullet(\sC/S,D)$.
\end{cor}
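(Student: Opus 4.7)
The plan is to combine the isomorphism of Proposition \ref{prop:descr-corr} in its group-completed form with the face-map formula of Lemma \ref{lem:Qn-boundary}, and then match the defining conditions of $NC_\bullet$ and $NQ_\bullet$ index by index.

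First I would pass to the group completion of Proposition \ref{prop:descr-corr} to get $\tilde C_n(\sC/S,D) \cong C_0(\sC/S,D) \oplus (Q_n/A^\times)^{\mathrm{gp}}$ for $n \ge 1$, under which Lemma \ref{lem:Qn-boundary} translates the face maps to
\[\partial_{n,j,\e}(Z,f/g) = (Z + Z_{j,\e}(f/g),\ \beta_{j,\e}(f/g)).\]
Thus $(Z,f/g)$ lies in $NC_n$ if and only if for every \emph{normalized index} $(j,\e) \in I := ([2,n]\times\{0\}) \cup ([1,n]\times\{\infty\})$: (a) the $(Q_{n-1}/A^\times)^{\mathrm{gp}}$-component vanishes, i.e.\ $\beta_{j,\e}(f/g) \in A^\times$; and (b) the $C_0$-component vanishes, i.e.\ $Z + Z_{j,\e}(f/g) = 0$ in $\mathrm{Div}(\sC,D)$. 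Condition (a) for all $(j,\e) \in I$ is precisely \eqref{defn:NQn2}--\eqref{defn:NQn3} of Definition \ref{defn:NQn}.

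Next I would establish the divisor identity
\[Z_{j,\e}(f/g) - Z_{j',\e'}(f/g) = \Div\!\left(\tilde\beta_{j,\e}(f/g)/\tilde\beta_{j',\e'}(f/g)\right) \quad \text{in } \mathrm{Div}(\sC,D),\]
where $\tilde\beta_{j,\e}(f/g) := \rho(f(\ldots,1+\e,\ldots))/\rho(g(\ldots,1+\e,\ldots))$ is the canonical representative in $A^\times$, well-defined once (a) holds. This follows by direct computation from the definition of $Z_{j,\e}$ in \ref{Z(f)}, using that the canonical extension of $v_Z$ to $k(\sC)(t_1,\ldots,t_n)$ satisfies $v_Z(t_i)=0$, so the monomial prefactors in $f(\ldots,1+\e,\ldots) = t^{\vec m}\rho(f(\ldots,1+\e,\ldots))$ contribute nothing. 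Granting this, (b) for all $(j,\e),(j',\e') \in I$ becomes the vanishing $\Div(\tilde\beta_{j,\e}/\tilde\beta_{j',\e'}) = 0$ together with the prescription $Z = -Z_{j_0,\e_0}(f/g)$ for any fixed $(j_0,\e_0) \in I$.

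The main obstacle is to upgrade ``$\Div(\tilde\beta/\tilde\beta) = 0$ in $\mathrm{Div}(\sC,D)$'' to membership in $H^0(\sC,\sO_{\sC|D}^\times)$, which by \eqref{not:OXD1} requires the ratio to lie in $G(\sC,D)$. For this I would analyze $\rho(f(\ldots,1+\e,\ldots))$ modulo $s$: the defining properties \eqref{defn:Qn1}--\eqref{defn:Qn2} of $Q_n$ force $f \equiv a_{\vec N^f}^f\,t^{\vec N^f} \pmod s$, hence $\rho(f(\ldots,1+\e,\ldots)) \equiv a_{\vec N^f}^f\,t^{\vec N^f(j)-\vec m^f(j,\e)} \pmod s$, where $\vec m^f(j,\e)$ is the multi-exponent removed by $\rho$. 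Condition (a) then forces the monomial parts for $f$ and $g$ to cancel, so that $\tilde\beta_{j,\e}(f/g) \equiv a_{\vec N^f}^f/a_{\vec N^g}^g \pmod s$, \emph{independently of} $(j,\e)$. Therefore $\tilde\beta_{j,\e}/\tilde\beta_{j',\e'} \equiv 1 \pmod s$ at each generic point of $D$, and combined with $\Div=0$ plus normality of $\sC$ (so that being a unit at every codimension one point implies being a global unit by Hartogs, and being $\equiv 1$ at the generic points of $D$ implies being $\equiv 1$ on all of $D$), this yields membership in $H^0(\sC,\sO_{\sC|D}^\times)$, i.e.\ condition \eqref{defn:NQn4}.

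Finally the isomorphism $NC_n \xr{\simeq} NQ_n$ is $(Z,f/g) \mapsto f/g$ with inverse $f/g \mapsto (-Z_{j_0,\e_0}(f/g), f/g)$, and compatibility with the differentials follows from Lemma \ref{lem:Qn-boundary} for $n \ge 2$ and from Lemma \ref{lem:Q1-G} for $n=1$ (where $Z_{1,0}(f/g) - Z_{1,\infty}(f/g) = \Div(f(1)g(\infty)/g(1)f(\infty))$ matches $\delta_1$). Passing from $\tilde C_\bullet$ to $C_\bullet = \tilde C_\bullet/C_\bullet^{\mathrm{degn}}$ does not affect $NC_\bullet$, since the degenerate subcomplex lies in the kernel of every face map used in its definition --- a standard fact for extended cubical objects.
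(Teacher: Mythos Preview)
Your proposal is correct and follows essentially the same route as the paper: group-complete Proposition \ref{prop:descr-corr}, transport the face maps via Lemma \ref{lem:Qn-boundary}, identify the normalized-index kernel conditions with \eqref{defn:NQn2}--\eqref{defn:NQn4}, and deduce the $H^0(\sC,\sO_{\sC|D}^\times)$-membership from the congruence $f\equiv a_{\vec N}\,t^{\vec N}\pmod s$ together with the absence of embedded components in $D$.

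There is one tactical difference worth recording. The paper constructs the inverse map $\theta_n\colon f/g\mapsto(-Z_{1,\infty}(f/g),f/g)$ and then verifies by a direct computation (the identity labelled \eqref{cor:NQn-corr2}) that $\theta_\bullet$ commutes with the differentials; this uses properties \eqref{defn:NQn3}--\eqref{defn:NQn4} in an essential way. You instead check compatibility on the \emph{forward} map $(Z,f/g)\mapsto f/g$, where it is immediate from Lemma \ref{lem:Qn-boundary} because the $Q_{n-1}$-component of $\partial_{n,1,0}$ is exactly $\beta_{1,0}=\delta_n$; once the forward map is known to be a bijection, this suffices. This is a genuine simplification over the paper's argument. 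One small caveat: your closing sentence (``the degenerate subcomplex lies in the kernel of every face map used in its definition'') is not literally true; what you need is the standard splitting $\tilde C_n = N\tilde C_n\oplus C_n^{\rm degn}$ for extended cubical objects, so that the projection $\tilde C_n\to C_n$ restricts to an isomorphism on the normalized part --- the paper is equally tacit on this point.
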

\begin{proof}
Consider the homological complex $C_0(\sC/S,D)\oplus (Q_\bullet/A^\times)^{\rm gp}$ with differentials as described in
Lemma \ref{lem:Qn-boundary}. It suffices to show that the normalized complex of 
$C_0(\sC/S,D)\oplus (Q_\bullet/A^\times)^{\rm gp}$ (in the sense of Lemma \ref{lem:NSH}) is equal to
$NQ_\bullet$. To this end, we take $(\alpha, f/g)\in C_0(\sC/S,D)\oplus (Q_n/A^\times)^{\rm gp}$.
We can assume that $f$ and $g$ have leading coefficient equal to 1. 
Assume that $\partial^j_\e (\alpha, f/g)=0$ for all 
$(j,\e)\in ([1,n]\times\{\infty\})\cup ([2,n]\times\{0\})$.
By Lemma \ref{lem:Qn-boundary} this is equivalent to 
$\beta_{j,\e}(f/g)\in A^\times$ and
$\alpha= -Z_{j,\e}(f/g)$ 
for all $(j,\e)\in( [1,n]\times\{\infty\})\cup ([2,n]\times\{0\})$.
Finally notice that under the above conditions
\[Z_{j,\e}(f/g)=Z_{j',\e'}(f/g)\]
is by the definition of $Z_{j,\e}$ in \ref{Z(f)} equivalent to
\[\Div(\beta_{j,\e}(f/g))=\Div(\beta_{j',\e'}(f/g)) \quad \text{on }\sC.\]
Hence $\beta_{j,\e}(f/g)/\beta_{j',\e'}(f/g)$ is a global unit of $\sC$. Furthermore since $f$ and $g$ have leading coefficient 
equal to 1 
condition \eqref{defn:Qn1} of Definition \ref{defn:Qn} says that $f$ and $g$ map to 1 in the residue fields
of the generic points of $D$. Since $D$ is a Cartier divisor, it has no embedded components and therefore
$f=g=1$ on all of $D$. This gives $\beta_{j,\e}(f/g)/\beta_{j',\e'}(f/g)\in H^0(\sC,\sO_{\sC|D}^\times)$.

We define $\theta_0: NQ_0 \to C_0(\sC/S,D)$ to be the identity map and for  $n\ge 1$,
\[\theta_n: NQ_n\to C_0(\sC/S,D)\oplus (Q_n/A^\times)^{\rm gp}, \quad f/g\mapsto (-Z_{1,\infty}(f/g), f/g).\]
We claim that the diagram 
\eq{cor:NQn-corr1}{\xymatrix{NQ_n\ar[r]^-{\theta_n}\ar[d]_{\delta_n} & 
                  C_0(\sC/S,D)\oplus (Q_n/A^\times)^{\rm gp}\ar[d]^{\partial^1_0} \\
              NQ_{n-1}\ar[r]_-{\theta_{n-1}} &     C_0(\sC/S,D)\oplus (Q_{n-1}/A^\times)^{\rm gp}                } }
commutes for all $n\ge 1$. It is then clear that $\theta$ induces an isomorphism from $NQ_\bullet$ 
to the normalized complex of $C_0(\sC/S,D)\oplus (Q_\bullet/A^\times)^{\rm gp}$, which proves the corollary.
For $n=1$ the commutativity of $\eqref{cor:NQn-corr1}$ follows directly from the definition of  $\delta_1$,
Lemma \ref{lem:Qn-boundary} and Lemma \ref{lem:Q1-G}. For $n\ge 2$ the commutativity amounts to show
\eq{cor:NQn-corr2}{-Z_{1,\infty}(f/g)+Z_{1,0}(f/g)=-Z_{1,\infty}(f(1,t_1,\ldots, t_{n-1})/g(1,t_1,\ldots, t_{n-1})).}
To this end, let $Z\in C_0(\sC/S,D)$ be a prime correspondence. 
With the notation from \ref{Z(f)} we compute
\[-( -v_Z(f/g)+b_{1,\infty}(f/g)_Z)+ (-v_Z(f/g)+b_{1,0}(f/g)_Z) \]
\[ = - v_{Z}\left(\frac{f(\infty, t_1,\ldots, t_{n-1})}{g(\infty, t_1,\ldots, t_{n-1})}\right)
                                          +  v_{Z}\left(\frac{f(1, t_1,\ldots, t_{n-1})}{g(1, t_1,\ldots, t_{n-1})}\right)\]
\[= -v_{Z}\left(\frac{f(1,\infty, t_1,\ldots, t_{n-2})}{g(1, \infty, t_1,\ldots, t_{n-2})}\right) 
        +v_Z\left(\frac{f(1, t_1,\ldots, t_{n-1})}{g(1, t_1,\ldots, t_{n-1})}\right)\]
\[= -\left(b_{1,\infty}\left(\frac{f(1, t_1,\ldots, t_{n-1})}{g(1, t_1,\ldots, t_{n-1})}\right)_Z
               -v_Z\left(\frac{f(1, t_1,\ldots, t_{n-1})}{g(1, t_1,\ldots, t_{n-1})}\right)\right),\]
where for the second equality we used  
 that $f/g$ has the properties \eqref{defn:NQn3} and \eqref{defn:NQn4} 
of Definition \ref{defn:NQn}. This gives \eqref{cor:NQn-corr2} and finishes the proof.
\end{proof}

\begin{lem}\label{lem:N-prop}
Assume $n\ge 2$ and  take $f,g\in Q_n$ with $f/g\in NQ_n$.
Denote by $a\in A^\times$ the leading coefficient of $f$ and by $b\in A^\times$ the leading coefficient of $g$ 
(see Definition \ref{defn:Qn}). Then for all $(j,\e)\in( [1,n]\times\{\infty\})\cup ([2,n]\times\{0\})$
we have
\eq{lem:N-prop1}{\frac{f(t_1, \ldots, t_{j-1}, 1+\e, t_{j+1}, \ldots, t_n)}{g(t_1, \ldots, t_{j-1}, 1+\e, t_{j+1}, \ldots, t_n)}
           =\frac{a}{b}\cdot t_1^{m_1}\cdots\widehat{t_j^{m_j}}\cdots t_n^{m_n},}
where $m_i=\deg_{t_i}(f)- \deg_{t_i}(g_i)$, $i\in [1,n]$.
If furthermore $\delta_n(f/g)=1$, then \eqref{lem:N-prop1} also holds for $(j,\e)=(1,0)$. 
\end{lem}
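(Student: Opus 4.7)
The plan is to substitute $t_j = 1+\epsilon$ into the ratio $f/g$, factor out the common monomials in the remaining $t_i$'s ($i\neq j$) to introduce $\rho$, and then use the $NQ_n$-condition to collapse the resulting $\rho$-quotient to an element of $A^\times$. A comparison of degrees and leading coefficients on both sides will then identify the exponents $m_i$ and the unit $a/b$ simultaneously.

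First I would verify that the substitution preserves the $t_i$-degrees for $i\neq j$, i.e.\ $\deg_{t_i}(f(\ldots,1+\epsilon,\ldots)) = \deg_{t_i}(f)$, and likewise for $g$. For $\epsilon = \infty$ the substitution extracts the leading coefficient of $f$ in $t_j$, and the coefficient of the resulting polynomial at the multi-index $(N_1,\ldots,\widehat{N_j},\ldots,N_n)$ is precisely $a \in A^\times$. For $\epsilon = 0$ the corresponding coefficient becomes $\sum_{i_j=0}^{N_j} a_{N_1,\ldots,i_j,\ldots,N_n}$; by the modulus condition \eqref{defn:Qn1} of Definition \ref{defn:Qn} all summands with $i_j < N_j$ lie in $sA$, so the sum reduces to $a$ modulo $s$, and since $s$ sits in every maximal ideal of the semilocal Dedekind domain $A$ while $a\in A^\times$, the sum is itself a unit of $A$, hence nonzero. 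I expect this to be the most delicate step of the argument, and the only one that genuinely uses the refined form of the modulus condition.

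Given this, I would factor $f(\ldots,1+\epsilon,\ldots) = t_1^{p_1}\cdots \widehat{t_j}\cdots t_n^{p_n}\cdot \rho(f(\ldots,1+\epsilon,\ldots))$ and similarly for $g$ with exponents $q_i$. By the previous step the two $\rho$'s are polynomials in $A[t_1,\ldots,\widehat{t_j},\ldots,t_n]$ of $t_i$-degree $\deg_{t_i}(f) - p_i$ and $\deg_{t_i}(g) - q_i$ respectively, with leading coefficients $a$ and $b$. For $(j,\epsilon)$ in the range $([1,n]\times\{\infty\})\cup ([2,n]\times\{0\})$, the hypothesis $f/g\in NQ_n$ together with conditions \eqref{defn:NQn2}--\eqref{defn:NQn3} of Definition \ref{defn:NQn} says precisely that $\beta_{j,\epsilon}(f/g) = \rho(f(\ldots))/\rho(g(\ldots))$ lies in $A^\times$. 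Two polynomials over the domain $A[t_1,\ldots,\widehat{t_j},\ldots,t_n]$ whose quotient is a constant unit of $A$ must agree monomial by monomial after rescaling by that constant, so comparing leading coefficients and $t_i$-degrees forces the constant to be $a/b$ and the exponents to satisfy $p_i - q_i = \deg_{t_i}(f)-\deg_{t_i}(g) = m_i$. Substituting back into $f(\ldots,1+\epsilon,\ldots)/g(\ldots,1+\epsilon,\ldots)$ yields \eqref{lem:N-prop1}.

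Finally, for the extra assertion at $(j,\epsilon) = (1,0)$ under the hypothesis $\delta_n(f/g)=1$, I would observe that by the definition of $\delta_n$ in Definition \ref{defn:NQn} (for $n\ge 2$) this hypothesis unwinds to exactly $\rho(f(1,t_1,\ldots,t_{n-1}))/\rho(g(1,t_1,\ldots,t_{n-1}))\in A^\times$, i.e.\ $\beta_{1,0}(f/g)\in A^\times$. The argument of the previous paragraph then applies verbatim to the pair $(1,0)$ as well, completing the proof.
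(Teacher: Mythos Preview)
Your argument for the case $\e=\infty$ is correct and is essentially what the paper does: the top coefficient of $f(t_1,\ldots,\infty,\ldots,t_n)$ is exactly $a$, so comparing degrees and leading coefficients of the two $\rho$'s identifies the constant $\beta_{j,\infty}(f/g)$ with $a/b$ and the exponent differences with the $m_i$.

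The gap is in the case $\e=0$. You correctly observe that the coefficient of $f(\ldots,t_j=1,\ldots)$ at the top multi-index $(N_1,\ldots,\widehat{N_j},\ldots,N_n)$ is $a':=\sum_{i_j}a_{N_1,\ldots,i_j,\ldots,N_n}$, a unit congruent to $a$ modulo $s$; similarly write $b'$ for $g$. But in the next paragraph you assert that the leading coefficients of the two $\rho$'s are $a$ and $b$: for $\e=0$ they are $a'$ and $b'$, and your comparison only yields $\beta_{j,0}(f/g)=a'/b'$. Congruence modulo $s$ does not force $a'/b'=a/b$ in $A^\times$, so the identification of the unit does not follow from what you have written. (Your exponent computation $p_i-q_i=m_i$ is fine in all cases; it is only the unit that is at issue.)

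The paper closes this gap by extracting more from the $\e=\infty$ analysis than just the leading term. The equality $\rho(f(\ldots,t_{j'}=\infty,\ldots))=(a/b)\cdot\rho(g(\ldots,t_{j'}=\infty,\ldots))$, taken at some $j'\neq j$, is a coefficient-by-coefficient identity; specializing all indices except the $j$-th to their top values gives
\[
a_{N_1,\ldots,\,i\,,\ldots,N_n}=(a/b)\cdot b_{M_1,\ldots,\,i-m_j\,,\ldots,M_n}\quad\text{for every }i,
\]
with the convention that out-of-range coefficients vanish. Summing this over $i$ yields $a'=(a/b)\,b'$, hence $\beta_{j,0}(f/g)=a/b$ as required. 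The same device, once $\delta_n(f/g)=1$ supplies $\beta_{1,0}(f/g)\in A^\times$, handles the remaining case $(1,0)$.
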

\begin{proof}
Write
\[f=\sum_{i_1=0}^{N_1}\cdots\sum_{i_n=0}^{N_n} a_{i_1,\ldots, i_n}\cdot t_1^{i_1}\cdots t_n^{i_n}\]
and
\[g=\sum_{j_1=0}^{M_1}\cdots\sum_{j_n=0}^{M_n} b_{j_1,\ldots, j_n}\cdot t_1^{j_1}\cdots t_n^{j_n}.\]
Then 
\[f(t_1,\ldots, t_{n-1},\infty)=\sum_{i_1=0}^{N_1}\cdots\sum_{i_{n-1}=0}^{N_{n-1}} a_{i_1,\ldots, i_{n-1},N_n}
                             \cdot t_1^{i_1}\cdots t_{n-1}^{i_{n-1}}\]
and 
\[g(t_1,\ldots, t_{n-1},\infty)=\sum_{j_1=0}^{M_1}\cdots\sum_{j_{n-1}=0}^{M_{n-1}} b_{j_1,\ldots, j_{n-1},M_n}
                             \cdot t_1^{i_1}\cdots t_{n-1}^{i_{n-1}}.\]
Since $n\ge 2$ we can only have an equality $\beta_{n,\infty}(f/g)=c_{n,\infty}\in A^\times$ if there exist
non-negative integers $r_i\in[0, N_i]$ and $s_i\in [0, M_i]$ such that 
\eq{lem:N-prop2}{a_{i_1+r_1,\ldots, i_{n-1}+r_{n-1}, N_n}= c_{n,\infty}\cdot b_{i_1+s_1,\ldots, i_{n-1}+ s_{n-1}, M_n},\quad 
  \text{for all }i_1,\ldots,i_{n-1}.}
We set $m_i(n):=r_i-s_i$. Then plugging in  $i_j=N_j-r_j$ we see that 
\[b_{N_1-m_1(n),\ldots, N_n-m_n(n)}\in A^\times.\]
By definition of $Q_n$ this is only possible if
\eq{lem:N-prop3}{\deg_{t_i}(f)=N_i=M_i+m_i(n)=\deg_{t_i}(g)+m_i(n),\quad \text{for }i\in [1,n-1].}
Furthermore we get
\[\beta_{n, \infty}(f/g)=
c_{n,\infty}=a_{N_1,\ldots, N_n}/b_{M_1,\ldots, M_n}=a/b.\]
We can play the same game with $\beta_{j,\infty}(f/g)$ for all $j\in [1,n]$ and get
similar as in \eqref{lem:N-prop3}
\[m_i(j)=N_i-M_i, \quad \text{for all } i\in [1,n]\setminus\{j\},\, j\in [1,n].\]
Hence $m_i(j)$ is constant for $i$ fixed and $j$ running; we set $m_i:=m_i(j)$.
Furthermore similar as above we get
\[\beta_{j,\infty}(f/g)=a/b, \quad \text{for all }j\in [1,n].\]
This proves \eqref{lem:N-prop1} for $(j,\e)\in [1,n]\times\{\infty\}$.
The equality $\beta_{n,0}(f/g)=c_{n,0}\in A^\times$ implies
that there exist non-negative integers $r_i'\in [0,N_i]$ and $s_i'\in[0, M_i]$ such that
\[\sum_{i=0}^{N_n}a_{i_1+r_1',\ldots, i_{n-1}+r_{n-1}', i}= c_{n,0}\cdot 
 \sum_{i=0}^{M_n} b_{i_1+s_1',\ldots, i_{n-1}+ s_{n-1}', i},\quad 
  \text{for all }i_1,\ldots,i_{n-1}.\]
If we plug in $N_j-r'_j$ for the $i_j$'s, the left hand side of the above equality is in $A^\times$
(by \eqref{defn:Qn1} and \eqref{defn:Qn2} of Definition \ref{defn:Qn}).
Hence the right hand side is also a unit in this case; this is only possible if
\[b_{N_1-r'_1+s_1',\ldots, N_{n-1}-r'_{n-1}+ s_{n-1}', M_n}\in A^\times\]
which is only possible if
\[N_i-(r'_i-s'_i)=M_i,\quad \text{for all } i\in [1,n-1].\]
Hence $r'_i-s'_i=m_i$ for $i\in [1,n-1]$. 
As above $\beta_{1,\infty}(f/g)=a/b$ implies , cf. \eqref{lem:N-prop2},
\[a_{N_1,\ldots,N_{n-1}, i}=\begin{cases}  a/b\cdot b_{M_1,\ldots, M_{n-1}, i-m_n}, & \text{if } i\ge m_n,\\
                                                           0, & \text{if } i<m_n. \end{cases}\]
Hence
\mlnl{a/b\cdot \sum_{i=0}^{M_n} b_{M_1,\ldots, M_{n-1}, i}  = \sum_{i=0}^{M_n} a_{N_1,\ldots, N_{n-1}, i+m_n}
\\=\sum_{i=0}^{N_n} a_{N_1,\ldots, N_{n-1}, i}
= c_{n,0}\cdot \sum_{i=0}^{M_n}b_{M_1,\ldots, M_{n-1}, i}.}
Hence $c_{n,0}=a/b$. Similarly $\beta_{j,0}(f/g)\in A^\times$ implies $\beta_{j,0}(f/g)=a/b$ for
all $j\in [2,n]$ and if $\delta_n(f/g)=1$ also for $j=1$. Hence the lemma.
\end{proof}

\begin{proof}[Proof of Theorem \ref{thm:RSHcurves}]
We start by showing that $H_n^S(\sC/S,D)=0$, for $n\ge 2$.  Take $f,g\in Q_n$ with $f/g\in NQ_n$ and assume
$\delta_n(f/g)=1$. We have to show that $f/g$ lies in the image of $\delta_{n+1}: NQ_{n+1}\to NQ_n$.
To this end we may assume that the leading coefficients of $f,g$ are equal to 1. We denote by
$N_i=\deg_{t_i}(f)$ and $M_i=\deg_{t_i}(g)$ their degrees. Set $m_i:=N_i-M_i$.
We define
\[m_i^+:=\begin{cases} m_i,&\text{if } m_i\ge 0,\\ 0,&\text{if }m_i<0\end{cases},\quad
  m_i^-:=\begin{cases} 0,&\text{if } m_i\ge 0,\\ -m_i,&\text{if }m_i<0.\end{cases}\]
and
\[h_1(t_1,\ldots, t_{n+1}):=  t_1\cdot (\prod_{i=2}^{n+1} t_i^{m_{i-1}^-}) \cdot f(t_2,\ldots, t_{n+1}),\]
\[h_2(t_1,\ldots, t_{n+1}):=   (\prod_{i=2}^{n+1} t_i^{m_{i-1}^-})\cdot f(t_2,\ldots, t_{n+1})- 
        (\prod_{i=2}^{n+1} t_i^{m_{i-1}^+})\cdot g(t_2,\ldots, t_{n+1}).\]
Then $h_1$ satisfies properties \eqref{defn:Qn1}-\eqref{defn:Qn2.5} of Definition \ref{defn:Qn}.
If we set $R_i:=\deg_{t_i}(h_1)$, then $R_1=1$ and $R_i=\max\{N_{i-1},M_{i-1}\}$, for $i\in [2,n+1]$.
By definition we have $\deg_{t_i}(h_2)\le R_i$ and the coefficients of $h_2$ before $\prod_{i=1}^{n+1}t_i^{R_i}$ and
$\prod_{i=2}^{n+1} t_i^{R_i}$ are zero. If we have a tuple $(i_2,\ldots, i_{n+1})\in \prod_{i=2}^{n+1}[1, R_i]$
in which $i_j<R_j$ for at least one $j$, and $c_{0, i_2, \ldots, i_{n+1}}$ is the coefficient of $h_2$ in front of
$\prod_{j=2}^{n+1} t_j^{i_j}$, then 
\[c_{0, i_2, \ldots, i_{n+1}}\in s^{\max_j\{R_j-i_j\}}\cdot A.\]
This follows from $f,g\in Q_n$ and 
\[\min\{\max_j\{N_{j-1}-(i_j-m_{j-1}^-)\},\max_j\{M_{j-1}-(i_j-m_{j-1}^+)\}\}=\max_j\{R_j-i_j\}.\]
Furthermore condition \eqref{defn:Qn2.5} of Definition \ref{defn:Qn} for $f,g$ clearly implies
$v(c_{0, i_2, \ldots, i_{n+1}})\ge 0$ for all $v\in \sV$.
Hence $h_1-h_2$ satisfies condition  \eqref{defn:Qn1} - \eqref{defn:Qn2.5} of Definition \ref{defn:Qn}.
Thus 
\[h:=\rho(h_1-h_2) \in Q_{n+1}.\]
We claim
\[q:=f(t_2,\ldots, t_{n+1})/h(t_1,\ldots, t_{n+1})\in NQ_{n+1}.\]
Indeed, since $f/g\in NQ_n$ and $\delta_n(f/g)=1$ it follows from Lemma \ref{lem:N-prop} that
under $t_i\mapsto 1+\e$ the polynomial $h_2$ is mapped to zero, for all $i\in [2,n+1]$ and $\e\in \{0,\infty\}$.
Hence with the notation from Definition \ref{defn:NQn} we have 
\[\beta_{j,\e}(q)=1,\quad \text{for all } j\in [2,n+1],\,\e\in\{0,\infty\}.\]
One checks directly that the same holds for $j=1$ and $\e=\infty$. 
Hence $q\in NQ_{n+1}$. Finally we observe
\[\delta_{n+1}(q)=\frac{f(t_1,\ldots, t_{n})}{g(t_1,\ldots, t_n)}.\]
This finishes the prove of $H^S_i(\sC/S,D)=0$ for $i\ge 2$.

We are left to show that $NQ_1/\im(\delta_2) \xr{\delta_1} NQ_0$ is quasi-isomorphic
to $G(\sC,D)\to {\rm Div}(\sC, D)$. 
First we claim that for $f,g\in Q_1$ we have
\eq{pfthm:RSHcurves1}{\frac{f(1)}{f(\infty)}=\frac{g(1)}{g(\infty)}\Longleftrightarrow f/g\in \im(\delta_2).}
Indeed, to see this '$\Longrightarrow$' direction we may assume that 
$f(\infty)=g(\infty)=1$ and that $n:=\deg(f)\ge \deg(g):=m$. 
Set
\[h(t_1,t_2):=\frac{f(t_2)}{\rho(t_1f(t_2)-(f(t_2)-t_2^{n-m}g(t_2)))}.\]
As above one checks that $h\in NQ_2$ and $\delta_2(h)=f/g$. 
The other direction follows immediately from Lemma \ref{lem:N-prop}.

We define the following map
\[\varphi: G(\sC,D)\to NQ_1/\im(\delta_2), \quad a\mapsto \rho((1-t_1)-a) \text{ mod }\im(\delta_2).\]
Notice that $\varphi$ is a group homomorphism since $f(t_1)=\rho((1-t_1)-ab)$ and 
$g(t_1)=\rho((1-t_1)-a)\rho((1-t_1)-b)$ satisfy the condition on the left of \eqref{pfthm:RSHcurves1}.
By '$\Longleftarrow$' of \eqref{pfthm:RSHcurves1} $\varphi$ is injective.
By '$\Longrightarrow$' of \eqref{pfthm:RSHcurves1} and Lemma \ref{lem:Q1-G} the map $\varphi$ is also surjective.
Altogether we see that $\varphi$ is an isomorphism which clearly fits into the following
commutative diagram
\[\xymatrix{ G(\sC,D)\ar[r]^{\Div}\ar[d]^\varphi_{\simeq} & {\rm Div}(\sC,D)\ar@{=}[d]^{\text{Lem \ref{lem:Div-C}}}\\
                   NQ_1/\im(\delta_2)\ar[r]^{\delta_1} & NQ_0.
}\]
This finishes the proof.
\end{proof}

\bibliographystyle{alpha}

\end{document}